%
%
%


\documentclass{amsart}

\usepackage{amssymb}

\usepackage{graphicx}
\usepackage{algorithm,algorithmic}

\usepackage[latin1]{inputenc}
\usepackage{amssymb,amsmath}
\usepackage{verbatim}
\usepackage{color}
\usepackage{algorithm,algorithmic}
\usepackage{enumerate}
\usepackage{url}

\newtheorem{theorem}{Theorem}[section]
\newtheorem{lemma}[theorem]{Lemma}

\newtheorem{conjecture}[theorem]{Conjecture}

\theoremstyle{definition}

\theoremstyle{remark}
\newtheorem{remark}[theorem]{Remark}

\numberwithin{equation}{section}

\newcommand\Z{\ensuremath{\mathbb Z}}
\newcommand\Q{\ensuremath{\mathbb Q}}\newcommand\R{\ensuremath{\mathbb R}}
\newcommand\C{\ensuremath{\mathbb C}}
\newcommand\Qb{{\overline\Q}}

\newcommand\GL{\operatorname{GL}}

\newcommand\M{\operatorname{M}}

\newcommand\Nm{\operatorname{Nm}}

\newcommand\SL{\operatorname{SL}}

\newcommand{\cH}{\mathcal{H}}
\newcommand{\fp}{{\mathfrak{p}}}
\newcommand{\fc}{{\mathfrak{c}}}
\newcommand{\fn}{{\mathfrak{n}}}


\def\cO{{\mathcal O}}


\newcommand{\mtx}[4]{\left(\begin{matrix}#1&#2\\#3&#4\end{matrix}\right)}

\newcommand{\slims}[4]{\left\{\begin{smallmatrix}#2&#4\\#1&#3\end{smallmatrix}\right\}}

\def\M{\operatorname{M}}

\def\p{\mathfrak p}\def\P{\mathbb P}
\newcommand{\comp}{\begin{picture}(6,5)(-3,-2)\put(0,1){\circle{2}} \end{picture}}\def\circ{\comp}
\def\fN{\mathfrak N}

\newcommand{\ra}{\rightarrow}

\newcommand{\lra}{\longrightarrow}

\begin{document}

\title[Computation of ATR Darmon points]{Computation of ATR Darmon points on non-geometrically modular elliptic curves}


\author{Xavier Guitart}
\address{ Universitat Polit\`ecnica de Catalunya, Barcelona \newline \indent Max Planck Institute for
Mathematics, Bonn}
\curraddr{}
\email{xevi.guitart@gmail.com}

\author{Marc Masdeu}
\address{Columbia University, New York}
\curraddr{}
\email{masdeu@math.columbia.edu}

\subjclass[2010]{11G40 (11F41, 11Y99)}

\date{\today}

\dedicatory{}

\begin{abstract}
ATR points were introduced by Darmon as a conjectural construction of algebraic points on
certain elliptic curves for which in general the Heegner point method is not available. So far the only numerical evidence, provided by Darmon--Logan and G\"artner, concerned curves arising as quotients of Shimura curves. In those special cases the ATR points can be obtained from the already existing Heegner points, thanks to results of Zhang and Darmon--Rotger--Zhao.

In this paper we compute for the first time an algebraic ATR point on a curve which is not uniformizable by any Shimura curve, thus providing
the first piece of numerical evidence that Darmon's construction works beyond geometric modularity. To this purpose we
improve the method proposed by Darmon and Logan by removing the requirement that the real quadratic base field be
norm-euclidean, and accelerating the numerical integration of Hilbert modular forms.
\end{abstract}

\maketitle
\section{Introduction}
 Let $F$ be a totally real number field and let $E/F$ be an elliptic curve of conductor $\fN$. Denote by
$L(E/F,s)$ the Hasse--Weil $L$-series attached to $E$, which is known to converge in the half plane
$\Re(s)>3/2$. Let us assume thorough this note that $E$ is \emph{modular}; that is to say, that $L(E/F,s)$ equals the
$L$-series
of a Hilbert modular form  over $F$ of weight $2$ and level $\fN$. Thanks to the
modularity theorems of \cite{Wi},
\cite{BCDT} and \cite{Sk-Wi}, $E$ is known to be modular if either $F=\Q$, or if $[F\colon \Q]>1$ and it satisfies
certain mild
conditions on the reduction type at primes above $3$. The $L$-series $L(E/F,s)$ admits
analytic continuation, and the Birch and Swinnerton-Dyer (BSD) Conjecture predicts that  its
order
of vanishing at $s=1$, called the \emph{analytic rank} of $E/F$, equals the rank of the group of $F$-rational points
$E(F)$. 

In this context,  BSD Conjecture is known to hold in analytic rank $0$ or $1$ provided that $E$ satisfies the following \emph{Jacquet--Langlands
hypothesis}:

\vspace{0.2cm}
\noindent {\bf (JL)} Either $[F\colon \Q]$ is odd or there is a prime $\p$ in $F$ such that  $\operatorname{ord}_{\p}(\fN)$ is
odd.

\begin{theorem}[Gross--Zagier, Kolyvagin, Zhang]\label{theorem: GZKZ}
 Let $E$ be a modular elliptic curve over a totally real number field $F$ satisfying (JL). If
$\operatorname{ord}_{s=1}L(E/F,s)\leq 1$ then 
\[
 \operatorname{ord}_{s=1}L(E/F,s)=\operatorname{rank}(E(F)).
\]
\end{theorem}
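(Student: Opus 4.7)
The plan is to reduce to a Heegner-point argument on a Shimura curve, following the classical strategy of Gross--Zagier and Kolyvagin as extended to the totally real setting by Zhang. The role of hypothesis (JL) is decisive: by the Jacquet--Langlands correspondence, (JL) guarantees the existence of a quaternion algebra $B/F$ which is split at exactly one archimedean place and whose finite ramification locus divides $\fN$ in such a way that the Hilbert modular form associated with $E$ transfers to an automorphic form on $B^{\times}$. The corresponding Shimura curve $X$ then admits a non-constant morphism $\pi\colon X\to E$ (up to isogeny), providing a geometric modular parametrization of $E$.

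Next, I would choose an auxiliary CM extension $K/F$ (i.e.\ a totally imaginary quadratic extension of $F$) satisfying a Heegner hypothesis relative to the level, and such that the sign of the functional equation of $L(E/K,s)$ is $-1$; this forces $L(E/K,s)$ to vanish to odd order at $s=1$. The CM points on $X$ attached to $K$ then produce, via $\pi$, a Heegner point $P_K\in E(K)$, and its traces down ring class fields form an Euler system. A careful choice of $K$ (using, e.g., a nonvanishing theorem of Waldspurger--Bump--Friedberg--Hoffstein type for the twisted $L$-series) will ensure that the relevant central values or derivatives behave as needed to recover information on $E(F)$.

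The analytic input is the Gross--Zagier--Zhang formula, which computes the N\'eron--Tate height $\widehat{h}(P_K)$ as a nonzero multiple of $L'(E/K,1)$. Combined with Kolyvagin's Euler system machinery, adapted to the Shimura curve setting by Zhang, this yields: if $\ord_{s=1}L(E/K,s)\leq 1$, then $\mathrm{rank}(E(K))$ equals that order and $\Sha(E/K)$ is finite. One then uses the factorization $L(E/K,s)=L(E/F,s)\cdot L(E^K/F,s)$ with $E^K$ the quadratic twist, together with the sign computation, to single out which factor vanishes and conclude the desired equality $\ord_{s=1}L(E/F,s)=\mathrm{rank}(E(F))$ in the regimes $\ord_{s=1}L(E/F,s)\in\{0,1\}$.

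The main obstacle is the Euler system step, which requires Zhang's generalization of Kolyvagin's descent from classical modular curves to Shimura curves over totally real fields, including the construction of Kolyvagin classes in $H^{1}(K,E[\ell])$, verification of the local conditions cutting out the Selmer group, and the rigidity/non-triviality arguments controlling the resulting bound. A secondary technical point is the construction of a CM extension $K$ with the required sign and analytic non-vanishing properties, which is where one must invoke the (JL) hypothesis again to ensure compatibility with the Shimura curve chosen at the outset; without (JL) no such parametrization exists in general, and this is precisely the gap that the ATR construction of Darmon is designed to fill.
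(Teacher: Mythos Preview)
The paper does not give its own proof of this theorem: it is stated as a result attributed to Gross--Zagier, Kolyvagin, and Zhang, and is used only as background. The brief discussion that follows the statement in the paper (the paragraph on geometric modularity, Shimura curve parametrizations, CM points, and the Gross--Zagier/Zhang formulas) is precisely the strategy you have outlined, so your sketch is consistent with the approach the paper alludes to. There is nothing further to compare.
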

In analytic rank $0$, BSD is also known for modular elliptic curves not satisfying (JL), thanks to the work
of Longo \cite{Lo}. However, in analytic rank $1$   (JL) cannot be dispensed with at the moment,  because the 
 construction of  non-torsion points relies on the existence of the so-called Heegner points. Indeed, if $E$ satisfies
(JL) then it 
is \emph{geometrically modular}: there exists a non-constant $F$-homomorphism
\begin{equation}\label{eq: shimura curve parametrization}
 \pi_E\colon \operatorname{Jac}(X)\lra E,
\end{equation}
from the Jacobian of a suitable Shimura curve $X$ defined over $F$ onto  $E$. Shimura curves are endowed with CM points,
which are
defined over ring class fields of quadratic CM extensions $K/F$. The projection of CM points  via $\pi_E$ gives
rise to Heegner points on $E$, whose arithmetic behavior is linked to
the corresponding $L$-series of $E$ thanks to formulas of Gross--Zagier and Zhang. 

On the other hand, if $E$ does not satisfy (JL) then it is not known to be geometrically
modular unless it is a \emph{$\Q$-curve}; i.e., a curve isogenous to all of its Galois conjugates. As a
consequence of Serre's modularity conjecture $\Q$-curves admit $\Qb$-parametrizations from
classical modular Jacobians, and this  has been exploited in \cite{DRZ} in order to construct Heegner points and prove 
BSD in
analytic rank $1$ for some $\Q$-curves not satisfying (JL). But BSD in analytic rank $1$ seems to remain intractable 
for elliptic curves which are not $\Q$-curves and do not satisfy (JL). Indeed, since they are not geometrically modular
the Heegner point method sketched in the previous paragraph cannot be applied in this setting.

The Heegner point construction  constitutes the only known procedure for systematically manufacturing algebraic
non-torsion
points on elliptic curves. However, several \emph{conjectural} constructions have emerged in the last years  under the generic name of
\emph{Stark--Heegner points}, or also \emph{Darmon
points} as the first such construction was introduced in \cite{Da1}. Variants of this initial
construction applying to several different settings have been proposed since then, for instance in \cite{Das},
\cite{Gr},
\cite{LRV}, \cite{Ga-art}, and \cite{GRZ}. The
leitmotif of these methods is the analytic construction of algebraic points on ring class fields of quadratic
extensions $K/F$ which, unlike the classical case, are not  CM.

This note deals with the effective computation of a type of Darmon points known as \emph{ATR
points}, which where introduced in \cite[Chapter VIII]{Da2}. To explain the terminology, recall that a number field 
is
said to be \emph{almost totally real}, or
\emph{ATR} for short, if it has exactly one complex non-real archimedean place.

 Let $K$ be a quadratic ATR extension of the totally real field $F$.
For an ideal $\fc$ of $F$ denote by $R_\fc\subset K$  the order of conductor $\fc$, and $H_\fc$  the ring class field
corresponding to $R_\fc$. Darmon associates to any optimal embedding $\varphi\colon R_\fc\hookrightarrow \M_2(\cO_F)$ a
point $P_\varphi\in E(\C)$, called an \emph{ATR point}, which is 
conjectured to be defined over $H_\fc$. Moreover, by analogy with the formulas of Gross--Zagier and Zhang, its trace down to $K$ is
believed to be non-torsion if and only if $\operatorname{ord}_{s=1}L(E/K,s)=1$.

An algorithm for computing ATR points  in the particular case where  $F$ is  real  quadratic 
and $E$ has conductor $1$
is given in \cite{DL}.  These elliptic curves do not satisfy (JL), so that the Heegner point
construction is not available in general. Both the definition of the points $P_\varphi$  and the conjectures of Darmon concerning them will be
recalled
in Section \ref{sec: ATR points and its computation}. For the moment, it is enough for us to mention that they are the image under
the Weierstrass map $\C/\Lambda_E\ra E(\C)$ of a complex number of the form
\begin{equation}\label{eq: ATR point first definition}
 J_\varphi=\int_{a_1}^{b_1}\int_{c_1}^{d_1} \omega +\cdots + \int_{a_n}^{b_n}\int_{c_n}^{d_n}\omega,
\end{equation}
where $\omega$ is a certain differential $2$-form on the Hilbert modular surface
$\SL_2(\cO_F)\setminus \mathcal H^2$. The limits of integration depend on the embedding $\varphi$, but they are not uniquely
determined: for a given $\varphi$ there are many possible choices for $a_i,b_i,c_i,d_i$. The Fourier series of
$\omega$
is explicitly computable from $E$, and term by term integration of a truncation  leads  to a numerical
approximation to $J_\varphi$. The rate of convergence depends essentially on the
imaginary parts of the limits, and this turns out to be the main computational restriction of this method.

The algorithm outlined above was used in~\cite{DL} to obtain  numerical evidence towards
Darmon's conjectures. More concretely, ATR points on three concrete elliptic curves were computed,
and they were checked to be (up to  a certain numerical precision)  multiples of the corresponding generators of the
Mordell--Weil groups. Calculations of the same kind were performed in \cite{Ga-th} for one more curve.
However, computational limitations restricted them to elliptic curves which all happen to be geometrically modular. In
this case, the BSD Conjecture implies that ATR points in these curves should be related to the already existing Heegner
points. Actually, in the case of
$\Q$-curves (as they are all examples considered in ~\cite{DL}) a precise relation between Heegner and ATR points is
conjectured in~\cite[\S 4.2]{DRZ}.

In this note we speed up the algorithm devised by Darmon and Logan by improving its two main bottlenecks. Namely,  the
computation of integrals of Hilbert modular forms, and the determination of limits $a_i,b_i,c_i,d_i$ in \eqref{eq: ATR
point first definition} having the highest imaginary part possible. This allows us to gather more numerical evidence in
support of  Darmon's conjectures, by calculating ATR points on elliptic curves which  were not computationally
accessible using the algorithm in~\cite{DL}.  In particular, we have been able to compute for the first time
an  ATR point of infinite order on a \emph{non-geometrically
modular} elliptic curve.

More concretely, the contents of the article are as follows. In
Section \ref{sec: ATR points and its computation} we review the definition of the points $P_\varphi$ and Darmon's
conjectures on their arithmetic properties. We also sketch the algorithm of Darmon and Logan for their explicit
computation.

 In Section \ref{sec: Integrals of Hilbert Modular
forms} we present the algorithm for speeding up the computation of integrals of Hilbert modular forms. The idea is to
use
the fact that the limits are invariant under the group $\SL_2(\cO_F)$ in order to transform the given integral into a sum of integrals whose  limits
are uniformly bounded away from the real axis, the bound depending only on  $F$. It is worth remarking that this
algorithm does not exploit any particular property of the integrals involved in ATR points, and therefore it may be of
independent interest for
computing integrals of Hilbert modular forms in other contexts.

In Section \ref{sec: Dependence on the continued
fractions} we comment on a trick that can sometimes
accelerate the computation of ATR points. The procedure for computing the limits $a_i,b_i,c_i,d_i$ in \eqref{eq: ATR point
first definition} for an embedding $\varphi$ involves at some point the calculation of a continued fraction expansion
of
an element in $F$.  We exploit the non-uniqueness of continued fractions to attach to a
given $\varphi$ limits $a_i,b_i,c_i,d_i$  with as high imaginary part as
possible.

Finally, in  Section \ref{sec: Numerical verification of Darmon's conjecture} we use Darmon--Logan's algorithm
together with the improvements of sections \ref{sec: Integrals of Hilbert Modular
forms} and \ref{sec: Dependence on the continued
fractions} to enlarge the pool of elliptic curves on which Darmon's conjectures have been numerically tested. 
Arguably, the most interesting
among them is the curve $E_{509}$ given by the
equation
\begin{equation*}
 y^2-xy-\omega y=x^3+(2+2w)x^2+(162+3w)x+(71+34\omega),\ \ \ \omega=\frac{1+\sqrt{509}}{2},
\end{equation*}
because it is not a $\Q$-curve.  We
have computed an ATR point
corresponding to the field $K=F(\sqrt{9144\omega + 98577})$, and we have numerically checked that it coincides  with a
multiple of the Mordell--Weil generator of $E_{509}(K)$. Since $E_{509}$  is not geometrically modular, such point
does not seem to be explained by the presence of Heegner points. This gives experimental evidence that
Darmon's construction  leads to algebraic points that are genuinely new, not attainable by classical
methods.

Finally, it is worth mentioning that ATR points are also the base of an algorithm by L. Demb\'el\'e  \cite{De} for
computing equations of elliptic curves with everywhere good reduction  attached to Hilbert
modular forms of level $1$. The authors hope that the algorithm presented in this note can be useful for this purpose,
and that it may lead in the future to a systematic  computation of such equations using Demb\'el\'e's method.
\subsubsection*{Acknowledgments} The authors are thankful to  Henri Darmon and Victor Rotger for initially
suggesting the problem and for many helpful conversations, and to the anonymous referee for many valuable
comments and suggestions. They are grateful to the Max Planck Institute for Mathematics for the hospitality and
financial support, and for making available its computational resources, crucially needed for
part of this note. Part of this work was also carried out in the facilities of Centro de Ciencias de Benasque
Pedro Pascual during the Summer of 2011. The authors received  financial support from DGICYT Grant 
MTM2009-13060-C02-01 and from 2009 SGR 1220.

\section{Computation of ATR points}\label{sec: ATR points and its computation}

Let $F$ be a real quadratic number field of discriminant $D$ and narrow class number $1$. Write $\cO_F$ for  its  ring of
integers and set
$\Gamma=\SL_2(\cO_F)$. We denote by $v_0,v_1$  the embeddings of $F$ into $\R$. For an element $x\in F$ we may write
$x_i$ instead of $v_i(x)$, and  $|x|$ for the norm of $x$, given by $x_0x_1$. Recall that $\Gamma$ acts discretely on
$\mathcal{H}^2$ via $v_0\times v_1$. The analytic variety $\Gamma\setminus
\mathcal{H}^2$ can be compactified by adding one cusp, which gives rise to the Hilbert modular surface $X$ attached to
$\Gamma$.

Let $K/F$ be a quadratic ATR extension. For an ideal $\fc$ of $F$ we denote by $H_\fc$ the ring class field corresponding
to the order $R_\fc$ of conductor $\fc$ of $K$. We assume without loss of
generality that $v_0$ extends to a
complex place of $K$ and $v_1$ extends to a pair of real places of $K$. We fix  an extension of $v_0$ to
$\overline{\Q}\subset \C$, which we use to to identify $K$ and its extensions with subfields of $\C$. Recall that an
embedding
of $\cO_F$-algebras $\varphi\colon R_\fc\hookrightarrow M_2(F)$ is
said to be \emph{optimal}  if $\varphi(K)\cap M_2(\cO_F)=\varphi(R_\fc).$ We denote by $\mathcal{E}_\fc$ the set of such
optimal embeddings.

Let $E/F$ be an elliptic curve of conductor $1$. In this section we review Darmon's construction, which attaches to
each optimal embedding $\varphi$ a point $P_\varphi\in E(\C)$ that conjecturally belongs to $E(H_\fc)$. There are
several
equivalent
ways of defining $P_\varphi$. For instance,  a nice geometric definition in terms of a non-algebraic analogue to the Abel--Jacobi map
is given
in \cite[\S 2.1]{DRZ}. However, for computational purposes the original definition of \cite[Chapter VIII]{Da2},  or
rather the subsequent refinement of \cite{DL} are better suited. Key to the approach of \cite{DL} is the
definition of certain \emph{semi-definite integrals} of Hilbert modular forms, whose existence and main properties we
will
take as a black box.

\subsection{Semi-definite integrals of Hilbert modular forms} Let $f\in S_2(\Gamma)$ be a Hilbert modular form. Recall
that $f$ has a Fourier expansion indexed by totally positive elements of $\cO_F$. Actually, the Fourier coefficient
corresponding to $n\in \cO_F^+$ only depends on the ideal $(n)$ generated by $n$, and the expansion is of the form
\begin{equation}\label{eq: fourier expansion}
 f(\tau_0,\tau_1)=\sum_{n\in \cO_F^+} a_{(n)}e^{2\pi i (\frac{n_0}{\delta_0}\tau_0+\frac{n_1}{\delta_1}\tau_1)},\ \
\tau_0,\tau_1\in\mathcal H,
\end{equation}
where $\delta_i=v_i(\delta_F)$ and $\delta_F$ is a totally positive generator of the different ideal of $F$. Let us
assume from now on that all Fourier
coefficients $a_{(n)}$ are rational numbers. The reader can refer to~\cite[\S 2.4]{GRZ} for the definition of ATR points
when
the Fourier coefficients generate a number field of degree $>1$, in which case they belong to some higher
dimensional modular abelian variety. 

The differential form
\[
 \omega_f=\frac{(2\pi i)^2}{\sqrt{D}} f(\tau_0,\tau_1)d\tau_0d\tau_1
\]
is invariant under the action of $\Gamma$ and extends to a holomorphic form at the cusp, thus defining a
holomorphic $2$-form on $X$. The expansion in Equation~\eqref{eq: fourier expansion} is useful for computing
integrals of $\omega_f$. Indeed,  for $x_0,x_1,y_0,y_1\in
\mathcal H$ we have that
\begin{small}
\begin{equation}\label{eq: integrals of w_f}
 \int_{x_0}^{y_0}\int_{x_1}^{y_1}\omega_f=\sqrt{D}\sum_{n \in \cO_F^+}\frac{a_{(n)} }{|n|}\left( e^{2\pi i
\frac{n_0}{\delta_0}
y_0}-e^{2\pi i \frac{n_0}{\delta_0}
x_0}\right)
\left( e^{2\pi i \frac{n_1}{\delta_1}
y_1}-e^{2\pi i \frac{n_1}{\delta_1}
x_1}\right).
\end{equation}
\end{small}
In the definition of ATR points the key role is not played by  $\omega_f$ but instead by the 
non-holomorphic differential
\begin{equation*}
 \omega_f^+=\frac{(2\pi i)^2}{\sqrt{D}}\Big( f(\tau_0,\tau_1)d\tau_0d\tau_1+
f(u_0\tau_0,u_1\overline{\tau}_1)d(u_0\tau_0)d(u_1\overline{\tau}_1) \Big),
\end{equation*}
where $u$ is a fundamental unit in $F$ such that $u_0>1$ and $u_1<-1$. The differential form $\omega_f^+$ is also
$\Gamma$-invariant, and it follows from the definition that it is invariant under the action of the matrix
$\tilde\gamma_u=\mtx u 0 0 1$. Therefore, if we let
\[
 \tilde \Gamma=\{\gamma \in \GL_2(\cO_F) \ \colon \ v_0(\det(\gamma))>0\}
\]
we find that
\begin{equation*}
 \int_{x_0}^{y_0}\int_{x_1}^{y_1}\omega_f^+=\int_{\gamma x_0}^{\gamma y_0}\int_{\gamma x_1}^{\gamma y_1}\omega_f^+, \
\ \ \text{for all } \gamma\in \tilde\Gamma.
\end{equation*} 
Here $\gamma$ acts on the outer limits (reps. inner limits) through $v_0$ (resp. $v_1$).

Let
\[
 \Lambda_f^+=\left\{\int_\delta \omega_f^+\ \colon \delta\in H_2(X,\Z) \right\}\subset \C
\]
be the period lattice of $\omega_f^+$.
\begin{theorem}[Darmon--Logan]
 There exists a unique map
\begin{equation*}
 \begin{array}{ccc}
\mathcal{H}\times \P^1(F)\times \P^1(F) & \longrightarrow & \C/\Lambda_f^+\\
(\tau_0,c_1,c_2) & \longmapsto & \int^{\tau_0}\int_{c_1}^{c_2} \omega_f^+
 \end{array}
\end{equation*}
satisfying the following properties:
\begin{enumerate}[(i)]
 \item\label{enum:p1} $\int^{\gamma\tau_0}\int_{\gamma c_1}^{\gamma c_2} \omega_f^+=\int^{\tau_0}\int_{c_1}^{c_2}
\omega_f^+$ for all $\gamma\in \tilde\Gamma$,
\item\label{enum:p2} $\int^{\tau_0}\int_{c_1}^{c_2} \omega_f^+ + \int^{\tau_0}\int_{c_2}^{c_3}
\omega_f^+=\int^{\tau_0}\int_{c_1}^{c_3} \omega_f^+$,
\item\label{enum:p3} $\int^{y}\int_{c_1}^{c_2} \omega_f^+-\int^{x}\int_{c_1}^{c_2}
\omega_f^+=\int_x^{y}\int_{c_1}^{c_2} \omega_f^+$.
\end{enumerate}
\end{theorem}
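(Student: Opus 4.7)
The plan is to realize $I(\tau_0, c_1, c_2)$ as an honest double integral with respect to an auxiliary base point, plus a correction term valued in $\C/\Lambda_f^+$ that absorbs the dependence on that base point. Since $f$ is a cusp form, it decays rapidly toward the cusps, so for any fixed $\tau^*\in\mathcal H$ the genuine double integral
\[
\widetilde I(\tau_0,c_1,c_2) := \int_{\tau^*}^{\tau_0}\int_{c_1}^{c_2}\omega_f^+
\]
is absolutely convergent (the inner path from $c_1$ to $c_2$ runs in $\mathcal H\cup\P^1(F)$, and the integrand decays at the cusps). By construction $\widetilde I$ already enjoys properties \eqref{enum:p3} and \eqref{enum:p2}; only the $\tilde\Gamma$-equivariance \eqref{enum:p1} can fail, and it fails by an amount coming solely from moving $\tau^*$ under $\gamma$.

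To correct this I would introduce a function $\lambda\colon \P^1(F)\times\P^1(F)\to \C/\Lambda_f^+$ and set $I := \widetilde I + \lambda$. Compatibility with \eqref{enum:p2} is preserved exactly when $\lambda(c_1,c_2)+\lambda(c_2,c_3)=\lambda(c_1,c_3)$, while enforcing \eqref{enum:p1} amounts to the cocycle identity
\[
\lambda(\gamma c_1,\gamma c_2)-\lambda(c_1,c_2) \equiv \int_{\tau^*}^{\gamma\tau^*}\int_{c_1}^{c_2}\omega_f^+\pmod{\Lambda_f^+}
\]
for every $\gamma\in\tilde\Gamma$. The right-hand side, reduced modulo $\Lambda_f^+$, defines a $1$-cocycle on $\tilde\Gamma$ with values in the module of additive cusp cochains, and the heart of the proof is to verify that this cocycle is a coboundary.

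The vanishing of this obstruction is essentially the defining property of the period lattice: the $2$-chains in $\mathcal H^2$ whose boundaries involve the cusps $c_1,c_2,\gamma c_1,\gamma c_2$ together with the segment from $\tau^*$ to $\gamma\tau^*$ descend to genuine $2$-cycles on the compactified Hilbert modular surface $X$, because all cusps of $\mathcal H^2$ collapse to the single cusp of $X$ and because $\gamma$ identifies the two vertical sides of the chain. Integrating the $\tilde\Gamma$-invariant form $\omega_f^+$ over such cycles produces elements of $\Lambda_f^+$ by definition. This provides the desired $\lambda$ and hence the map $I$; properties \eqref{enum:p2} and \eqref{enum:p3} are inherited from $\widetilde I$, while \eqref{enum:p1} holds by the choice of $\lambda$.

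For uniqueness, if $I_1,I_2$ both satisfy (i)--(iii), their difference $D$ is independent of $\tau_0$ by \eqref{enum:p3}, additive in the cusps by \eqref{enum:p2}, and $\tilde\Gamma$-invariant by \eqref{enum:p1}. Since $F$ has narrow class number $1$, the group $\tilde\Gamma$ acts transitively on $\P^1(F)$, which combined with the cocycle identity reduces $D$ to a single $\tilde\Gamma$-invariant cusp cochain, and invariance under parabolic stabilizers then forces $D\equiv 0$ modulo $\Lambda_f^+$. The main obstacle I expect is the geometric identification of the $2$-cycles on $X$ whose periods realize the obstruction: keeping careful track of orientations and of the role of the fundamental unit $u$ that enters the definition of $\omega_f^+$ (together with the twist by $\tilde\gamma_u$) is the delicate technical step, and it is what ultimately converts the $\tilde\Gamma$-invariance of the differential into the required well-definedness of the semi-definite integral modulo $\Lambda_f^+$.
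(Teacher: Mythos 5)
Your plan goes further than the paper does: the paper cites Darmon--Logan \cite{DL} for existence and only reviews the uniqueness argument in \S\ref{subsection: continued fractions trick}, whereas you attempt both. Two concrete gaps remain.

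For \emph{uniqueness}, your observation that the difference $D = I_1 - I_2$ is $\tau_0$-independent, additive in the cusps, and $\tilde\Gamma$-invariant is correct, but transitivity of $\tilde\Gamma$ on $\P^1(F)$ together with parabolic invariance does not on its own force $D\equiv 0$. Parabolic invariance and the matrix $\smtx{0}{-1}{1}{1}$ give $D(0,\infty)=0$, and additivity then propagates vanishing along any chain of \emph{adjacent} cusps (pairs of the form $(\gamma\cdot 0,\gamma\cdot\infty)$). But transitivity alone does not supply such chains: the missing ingredient is the Cooke--Vaserstein theorem, that every $c\in F$ admits a finite continued fraction expansion over $\cO_F$ when $F$ has class number $1$ and $\cO_F^\times$ is infinite. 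Without it one cannot reduce $D(c_1,c_2)$ to a sum of $\tilde\Gamma$-translates of $D(0,\infty)$. The paper's review in \S\ref{subsection: continued fractions trick} places this continued-fraction step front and center precisely because it is the load-bearing part of the argument (and makes it algorithmic).

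For \emph{existence}, your cocycle/coboundary framework is the right scaffolding, and the cocycle you write down is the correct obstruction. However the step ``this cocycle is a coboundary modulo $\Lambda_f^+$'' is exactly the theorem, and the geometric justification you offer does not work as stated. The $2$-chain $\rho(\tau^*,\gamma\tau^*)\times\rho(c_1,c_2)$ has boundary pieces $\{\tau^*\}\times\rho(c_1,c_2)$ and $\{\gamma\tau^*\}\times\rho(c_1,c_2)$; since $\gamma$ acts diagonally, it carries the first to $\{\gamma\tau^*\}\times\rho(\gamma c_1,\gamma c_2)$, not to the second. So $\gamma$ does \emph{not} identify the two vertical sides, the chain does not descend to a $2$-cycle on $X$, and the resulting integral is not a period for any a priori reason. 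The vanishing needed here is a genuine group-cohomological fact about $H^1(\tilde\Gamma,\,\cdot\,)$ with coefficients in the module of cusp cochains, not a formal consequence of the definition of $\Lambda_f^+$; this is the content of \cite[\S 1]{DL}.
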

For the existence of such map we refer the reader to \cite[\S 1]{DL}. Uniqueness is proved in \cite[\S 4]{DL},
and it follows from repeated application
of properties (\ref{enum:p1}), (\ref{enum:p2}) and (\ref{enum:p3}). Since this also leads to an algorithm for
computing the map, we review the proof in the next section.

\subsection{Computation of semi-definite integrals via continued fractions}\label{subsection: continued fractions trick}
Given $b_0,b_1,\dots,b_n\in \cO_F$ the
(finite) \emph{continued fraction}
$[b_0,b_1,\dots,b_n]\in F$ is defined, inductively, as
\[
 [b_0]=b_0, \ [b_0,b_1]=b_0+\frac{1}{b_1},\dots, [b_0,b_1,\dots,b_n]=[b_0,[b_1,\dots,b_n]].
\]
Let $[b_0,b_1,\dots,b_k] =\frac{p_k}{q_k}$, with $p_k,q_k\in \cO_F$ coprime. It is well known that 
\begin{equation}\label{eq: convergents}
p_kq_{k-1}-p_{k-1}q_k=(-1)^{k-1}. 
\end{equation}
 Since $\cO_F^\times$ is infinite and $F$ has trivial
class number, by a result of Cooke--Vaserstein \cite{cooke} every element $c\in F$ can be written as a finite continued
fraction. See \cite{GM} and \S \ref{sec: Dependence on the continued fractions} for an effective version of this result.

Two cusps $c_1,c_2\in \P^1(F)$ are said to be \emph{adjacent} if $c_1=\gamma\cdot 0$ and $c_2=\gamma\cdot \infty$ for
 some $\gamma\in \Gamma$. One can join every cusp $c\in F$ with $\infty$ by a sequence of adjacent cusps. Indeed, if
$c=[b_0,b_1,\dots,b_n]$ then the sequence
\[
 \infty,\frac{p_0}{q_0},\frac{p_1}{q_1},\dots,\frac{p_n}{q_n}=c
\]
has this property thanks to \eqref{eq: convergents}. Using this fact and property~(\ref{enum:p1}) every integral
$\int^{\tau_0}\int_{c_1}^{c_2}\omega_f^+$ can be written as a sum of integrals of the form
$ \int^{\tau_0}\int_{\gamma\cdot 0}^{\gamma\cdot\infty}\omega_f^+$. Thanks to~(\ref{enum:p1}) they are of the
form $\int^\tau\int_{0}^\infty\omega_f^+$, and can be computed as follows:
\begin{align*}
 \int^\tau\int_0^\infty\omega_f^+&=\int^\tau\int_0^1\omega_f^++\int^\tau\int_1^\infty\omega_f^+\\
&=\int^{-\frac{1}{\tau}}\int_\infty^{-1}\omega_f ^++\int^{\tau-1}\int_0^\infty\omega_f^+\\
&=\int_{1-\frac{1}{\tau}}^{\tau-1}\int_{0}^{\infty}\omega_f^+.
\end{align*}
Integrals of the form $\int_{\tau_1}^{\tau_2}\int_0^\infty\omega_f^+$, with $\tau_1,\tau_2\in\mathcal{H}$, can be
computed by taking $\tau_3\in \mathcal{H}$ and writing
\begin{equation}\label{eq: from semidefinite to definite}
 \int_{\tau_1}^{\tau_2}\int_0^\infty\omega_f^+=\int_{\tau_1}^{\tau_2}\int_0^{\tau_3}\omega_f^++\int_{\tau_1}^{\tau_2}
\int_{\tau_3}^\infty\omega_f^+=\int_{-\frac{1}{\tau_1}}^{-\frac{1}{\tau_2}}\int_\infty^{-\frac{1}{\tau_3}}\omega_f^++\int_{\tau_1}^{\tau_2}
\int_{\tau_3}^\infty\omega_f^+,
\end{equation}
for which formula \eqref{eq: integrals of w_f} can be used.

\subsection{Definition of ATR points}

 Under the assumption that $E$ is modular there exists a Hilbert modular newform $f_E\in
S_2(\Gamma)$ such that $L(E/F,s)=L(f_E,s)$. The Fourier expansion of $f_E$ can be explicitly computed as follows. For
 a prime ideal $\fp\subset F$, let $a_\fp:=|\fp|+1-\# E(\cO_K/\fp)$, where $|\fp|$ denotes $\# \cO_F/\fp$. For arbitrary
ideals $\fn\subset F$ the integer $a_\fn$ is defined by means of the identity
\[
 \sum_{\fn\subset F} a_\fn|\fn|^{-s}=\prod_{\fp \ \text{prime}} \left( 1-a_\fp|\fp|^{-s} +|\fp|^{1-2s} \right)^{-1},
\]
and the Fourier expansion of $f_E$ is given by
\begin{equation}
 f_E(\tau_0,\tau_1)=\sum_{n\in \cO_F^+} a_{(n)}e^{2\pi i (\frac{n_0}{\delta_0}\tau_0+\frac{n_1}{\delta_1}\tau_1)},\ \
\tau_0,\tau_1\in\mathcal H.
\end{equation}

 Let $\varphi\colon R_\fc\hookrightarrow M_2(F)$ be an optimal embedding. Since $K$ is ATR the group
\[
 \Gamma_\varphi=\{ \gamma \in \varphi(\mathcal{O}_F)\ \colon \ \det(\gamma)=1\}
\]
has rank $1$; let $\gamma_\varphi$  be one of its generators. Since $v_0$ extends to a complex place of $K$,
the action of 
$K^\times$ on $\mathcal H$  by means of $v_0\circ \varphi$ has a single fixed point $\tau_0$. Let $J_\varphi$ be the
quantity in $\C/\Lambda_f^+$ defined as
 \begin{equation}\label{eq: definition of ATR point} J_\varphi=\int^{\tau_0}\int_{\infty}^{\gamma_\varphi
\infty}\omega_f^+.\end{equation}

Let $\omega_E\in H^0(E,\Omega^1)$ be a differential which extends to a smooth differential on the N\'eron model of $E$
over $\cO_F$. Let 
\[
 \Lambda_i=\left\{ \int_Z v_i(\omega_{E}) \ \colon \ Z \in H_1(E_j,\Z) \right\}
\]
and let $\lambda_j^+$ (resp. $\lambda_j^-$) be a generator of $\Lambda_j\cap \R$ (resp. $\Lambda_j\cap i\R$). 

\begin{conjecture}[Oda]\label{conj: Oda}
 There exists a integer $c$ such that $\frac{c}{\lambda_1^+}\Lambda_f^+\subseteq \Lambda_0$.
\end{conjecture}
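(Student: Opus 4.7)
The plan is to deduce the conjecture from the Eichler--Shimura correspondence for Hilbert modular surfaces combined with the modularity of $E$. Since $E$ is assumed modular, there is a Hilbert newform $f_E\in S_2(\Gamma)$ with $L(E/F,s)=L(f_E,s)$, and the Hecke eigensystem of $f_E$ cuts out a two-dimensional rational Hecke-isotypical piece $M_{f_E}$ of $H^2(X,\Q)$. The expected form of Eichler--Shimura in this setting predicts an algebraic correspondence identifying $M_{f_E}$, up to isogeny, with $H^1(E)\otimes H^1(E^\sigma)$, where $\sigma$ is the nontrivial element of $\Gal(F/\Q)$ and $E^\sigma$ denotes the Galois conjugate elliptic curve.

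Granting this isogeny, the rest of the argument is linear algebra and Hodge theory. The K\"unneth decomposition of $H^1(E,\C)\otimes H^1(E^\sigma,\C)$ matches the four-dimensional $f_E$-isotypical subspace of $H^2(X,\C)$ spanned by
\[
f(\tau_0,\tau_1),\quad f(\tau_0,\bar\tau_1),\quad f(\bar\tau_0,\tau_1),\quad f(\bar\tau_0,\bar\tau_1),
\]
each equipped with the appropriate product of $d\tau$'s and $d\bar\tau$'s. The symmetrization by the fundamental unit $u$ that defines $\omega_f^+$ is precisely the projector picking out $v_0(\omega_E)$ on the first factor and the real line $\R\cdot\lambda_1^+\subset \C$ on the second; integrating it against the image under the isogeny of a class $\delta\in H_2(X,\Z)$ therefore produces $\lambda_1^+$ times an element of $\Lambda_0$. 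The integer $c$ of the statement is then the exponent of the cokernel of the map $H_2(X,\Z)\to H_1(E,\Z)\otimes H_1(E^\sigma,\Z)$ induced by the isogeny, and is bounded by its degree.

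The main obstacle is the isogeny $E\times E^\sigma \sim M_{f_E}$ itself. Unconditionally, it is known only when geometric modularity holds: under the Jacquet--Langlands hypothesis (JL) via Shimura-curve uniformization of $E$, and for $\Q$-curves via Serre's modularity conjecture combined with restriction of scalars applied to the associated $\Q$-simple factor of a classical modular Jacobian. For a curve such as $E_{509}$, which satisfies neither condition, the existence of this isogeny is precisely the content of (a strong motivic form of) Oda's conjecture, and no unconditional proof is currently available. The sketch above should therefore be read conditionally: it explains why the statement is plausible and why $c$ should be small and controlled by an isogeny degree, while the numerical experiments of Section \ref{sec: Numerical verification of Darmon's conjecture} can be viewed as indirect evidence for this underlying motivic picture even in the non-geometrically-modular case.
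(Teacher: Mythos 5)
The paper offers no proof of this statement: it is stated as a \emph{conjecture}, attributed to Oda, and the subsequent construction simply proceeds by granting Conjecture~\ref{conj: Oda} in order to define $P_\varphi$. There is therefore nothing in the paper to compare your argument against.

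That said, your write-up correctly recognizes the conjectural status, and the motivic sketch you give is the standard rationale behind Oda's period relation: the $f_E$-isotypical Hodge structure in $H^2(X)$ should be identified, up to isogeny and a Tate twist, with $H^1(E)\otimes H^1(E^\sigma)$, and the unit-symmetrization built into $\omega_f^+$ is what projects onto the piece whose periods are $\lambda_1^+$ times elements of $\Lambda_0$. You also correctly locate the genuine obstruction: outside the geometrically modular cases (those satisfying (JL), and $\Q$-curves), the isogeny relating $M_{f_E}$ to $E\times E^\sigma$ is itself conjectural, and curves such as $E_{509}$ are precisely the cases the paper targets because no such isogeny is known. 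So your account is a reasonable conditional explanation of why the conjecture is expected to hold and why $c$ should be controlled by an isogeny degree, but it is a heuristic rather than a proof; the only gap is the one you already name, namely that Oda's conjecture is open in exactly the generality the paper needs.
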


Granting Conjecture  \ref{conj: Oda} and denoting by $\eta\colon \C/\Lambda_0\ra E_0(\C)$  the Weierstrass
parametrization map, the ATR point $P_\varphi$ is defined as
\[
 P_\varphi=\eta(\frac{c}{\lambda_1^+}\cdot J_\varphi) \in E_0(\C).
\]
The group $\tilde \Gamma$ acts by conjugation on $\mathcal{E}_\fc$, and the set of equivalence classes
$\mathcal{E}_\fc/\tilde\Gamma$ has cardinal $[H_\fc \colon K]$ (cf. \cite[\S 8.5]{Da2}).
\begin{conjecture}[Darmon]
The point $P_\varphi$ belongs to $E_0(H_\fc)$. Moreover, the point
\[
 P_K=\sum_{\varphi\in \mathcal{E}_\fc/\tilde\Gamma} P_\varphi
\]
 belongs to
$E_0(K)$ and it is non-torsion if and only if $\operatorname{ord}_{s=1}L(E/K,s)=1$.
\end{conjecture}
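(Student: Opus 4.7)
This is a deep open conjecture, not a theorem with a known proof: the present paper gives numerical evidence rather than a derivation, and a full proof would require bridging the gap between an archimedean analytic construction and algebraicity over specific class fields. So my ``plan'' is really a roadmap of the kind of structural inputs that a proof would have to assemble. The statement has three distinct assertions: (a) $P_\varphi \in E_0(H_\fc)$; (b) $P_K \in E_0(K)$; and (c) $P_K$ is non-torsion iff $\operatorname{ord}_{s=1}L(E/K,s)=1$. I would attack them in roughly the opposite order from which they are stated, because (b) is a formal consequence of (a), and (c) is where the cleanest analytic input lies.

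The first thing I would do is isolate the expected $\Gal(H_\fc/K)$-action. The set $\mathcal E_\fc / \tilde\Gamma$ has cardinality $[H_\fc:K]$, so there should be a reciprocity map $\Gal(H_\fc/K) \to \mathcal E_\fc/\tilde\Gamma$ under which $\sigma \cdot P_\varphi = P_{\sigma\cdot\varphi}$. I would try to establish this equivariance first as a purely analytic statement of shape ``$J_{\sigma\cdot\varphi}$ differs from $J_\varphi$ only by an element of $\Lambda_f^+$,'' without yet proving algebraicity. This should reduce (b) to (a): once each $P_\varphi$ is algebraic, $P_K = \sum_\varphi P_\varphi$ is $\Gal(H_\fc/K)$-invariant by construction, hence in $E_0(K)$.

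For (c), I would try to prove a Gross--Zagier/Zhang-type formula relating the N\'eron--Tate height $\widehat h(P_K)$ to $L'(E/K,1)$ (times explicit periods and local factors). The natural strategy is to mimic Zhang's proof on quaternionic Shimura varieties: express $J_\varphi$ as a regularized period of $\omega_f^+$ against a cycle supported at $\tau_0$ and $\gamma_\varphi$, expand both sides as Fourier series, and match term by term with the Rankin--Selberg integral representation of $L(E\otimes\theta_K,s)$, where $\theta_K$ is the theta series of the relevant class-group character. Granting (a)--(b) and such a height formula, the non-torsion half of (c) follows in analytic rank $1$; the converse direction would require an analogue of Kolyvagin's Euler system argument for the full family of ATR points as $\fc$ varies, bounding the Selmer group in analytic rank $\geq 2$.

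The main obstacle — and the reason this remains a conjecture — is (a). The construction of $P_\varphi$ lives entirely on the Hilbert modular surface $\Gamma\backslash \cH^2$, which is \emph{not} a moduli space for objects carrying abelian-variety-valued CM structure, so there is no geometric Abel--Jacobi source for algebraicity the way $X_0(N)$ provides for Heegner points. The most promising route I can see is to construct a $p$-adic or $\ell$-adic ``Darmon point'' $P_\varphi^{\mathrm{alg}}$ by a different mechanism (e.g.\ $p$-adic integration on a Bruhat--Tits tree, as in the work of Greenberg, Longo--Vigni, and others), prove its algebraicity by ramification-theoretic or Iwasawa-theoretic methods, and then prove an equality $P_\varphi = P_\varphi^{\mathrm{alg}}$ via a comparison of periods. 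Establishing such a comparison, which would likely require a $p$-adic Hodge-theoretic or automorphic input along the lines of the Jacquet--Langlands transfer that is \emph{not} available here (precisely because (JL) fails), is where I expect the real work to lie; without it, only numerical corroboration of the kind this paper carries out is within reach.
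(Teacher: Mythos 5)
You correctly identified that this is an open conjecture, not a theorem, and that the paper contains no proof of it: the paper's contribution is purely computational, providing numerical evidence (in Section 5) in the form of ATR points on $E_{29}$, $E_{37}$, $E_{109}$, and $E_{509}$ that agree, to the working precision, with integer multiples of known Mordell--Weil generators. Your roadmap — reducing (b) to (a) via Shimura-reciprocity equivariance, attacking (c) through a Gross--Zagier/Zhang-type height formula plus a Kolyvagin-style argument, and flagging (a) (algebraicity of the individual $P_\varphi$) as the fundamental obstruction precisely because the Hilbert modular surface carries no geometric Abel--Jacobi source when (JL) fails — is a sensible and accurate account of the state of the art and of why the problem is hard. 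There is nothing in the paper to compare it against beyond the numerics, so the only caveat worth recording is that you should not present the roadmap as more than speculation: in particular, the proposed detour through a $p$-adic Darmon point and a period comparison is itself conjectural in this non-(JL) setting, as you acknowledge.
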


Note that under the assumptions of this section, namely the conductor of $E/F$ is $1$ and $K/F$ is ATR, the
$L$-function $L(E/K,s)$ has sign $-1$. Thus the condition  $\operatorname{ord}_{s=1}L(E/K,s)=1$ is equivalent to
$L'(E/K,1)\neq 0$, which can be numerically checked.

\section{Integration of Hilbert Modular forms}\label{sec: Integrals of Hilbert Modular forms}

When evaluating \eqref{eq: integrals of w_f} one can collect the elements $n\in \cO_F^+$ modulo powers of $u^2$, because
the Fourier coefficient corresponding to $n$ only depends on the ideal $(n)$. The sum corresponding to $n\cdot \langle
u^2\rangle$ is then
\[
S_n= \sqrt{D}\sum_{k=-\infty}^{\infty}\frac{ a_{(n)}}{|n|}\left( e^{2\pi i \frac{n_0}{\delta_0}u_0^{2k}
y_0}-e^{2\pi i \frac{n_0}{\delta_0}{u_0^{2k}}
x_0}\right)
\left( e^{2\pi i \frac{n_1}{\delta_1}{u_0^{-2k}}
y_1}-e^{2\pi i \frac{n_1}{\delta_1}{u_0^{-2k}}
x_1}\right).
\]
For a fixed $k$, the modulus of each exponential term (after multiplying out the brackets)
is of the form
\begin{equation}\label{eq: exponential term}
 e^{-2\pi(\frac{n_0}{\delta_0}ru_0^{2k}+\frac{n_1}{\delta_1}su_0^{-2k})}, \ \ \text{with} \ 
r\in\{\Im(y_0),\Im(x_0)\},s\in\{\Im(y_1),\Im(x_1)\}.
\end{equation}
It is easy to see that \eqref{eq: exponential term} has a single maximum, when viewed as a function of $k$, and that
the maximum value afforded by the four exponential terms is bounded by
\[
M_n= e^{\frac{-4\pi\sqrt{|n|}\epsilon}{\sqrt{D}} },
\]
where $\epsilon = \epsilon(x_0,y_0,x_1,y_1)$ is defined by:
\begin{equation}\label{eq: def de epsilon}
\epsilon(x_0,y_0,x_1,y_1)^2=\min\{\Im(x_0)\Im(x_1),\Im(x_0)\Im(y_1),\Im(y_0)\Im(y_1),\Im(y_0)\Im(x_1)\}.
\end{equation}
Moreover, one easily checks that $S_n$ is dominated  by a geometric series and that
\[S_n\leq \frac{2\sqrt{ D} |a_{(n)}|M_n}{|n|},\quad \text{for } |n|>>0.\]
This estimate allows us to know a priori how many terms need to be considered in order to obtain a prescribed accuracy. We observe that the speed of convergence of expression \eqref{eq: integrals of w_f} depends on the limits
$x_0,y_0,x_1,y_1$ through the quantity $\epsilon(x_0,y_0,x_1,y_1)$. The main result of this section is the following.
\begin{theorem}\label{th: main}
 There exists a constant $\epsilon_F$, which depends only on $F$, such that for every $(x_0,y_0,x_1,y_1)\in
\mathcal{H}^4$ and for every $\epsilon_0<\epsilon_F$ the integral $\int_{x_0}^{y_0}\int_{x_1}^{y_1}\omega_f$ can be
expressed as
\begin{equation}\label{eq: eq del main theorem}
 \int_{x_0}^{y_0}\int_{x_1}^{y_1}\omega_f=\int_{a_1}^{b_1}\int_{c_1}^{d_1}\omega_f +\cdots +
\int_{a_n}^{b_n}\int_{c_n}^{d_n}\omega_f,
\end{equation}
with $\epsilon(a_i,b_i,c_i,d_i)\geq \epsilon_0$ for all $i=1,\dots ,n$.
\end{theorem}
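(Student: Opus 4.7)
My proof plan combines two structural properties: additivity of the double integral $\int_{x_0}^{y_0}\int_{x_1}^{y_1}\omega_f$ under subdivision of either integration interval, and invariance of $\omega_f$ under $\Gamma=\SL_2(\cO_F)$, which translates into
\[
\int_{x_0}^{y_0}\int_{x_1}^{y_1}\omega_f
=\int_{v_0(\gamma)x_0}^{v_0(\gamma)y_0}\int_{v_1(\gamma)x_1}^{v_1(\gamma)y_1}\omega_f, \qquad \gamma\in\Gamma.
\]
Combining them, if I partition $[x_0,y_0]$ and $[x_1,y_1]$ into subintervals and then apply a possibly different $\gamma_{ij}\in\Gamma$ to each resulting sub-rectangle, identity \eqref{eq: eq del main theorem} holds automatically by telescoping. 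The whole content of the theorem is to choose the $\gamma_{ij}$'s so that, for each $(i,j)$, all four corners of the transformed sub-rectangle realize $\epsilon\geq \epsilon_0$.

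The crux is a one-point reduction lemma: there is a constant $\epsilon_F>0$, depending only on $F$, such that for every $(\tau_0,\tau_1)\in\mathcal{H}^2$ one can find $\gamma\in\Gamma$ with
$\min\bigl(\Im(v_0(\gamma)\tau_0),\,\Im(v_1(\gamma)\tau_1)\bigr)\geq \epsilon_F$. I would prove this by combining two classical facts. First, since $F$ has narrow class number one, $\Gamma$ has a single cusp at $\infty$ and the Siegel reduction theory places every orbit into a region with $\Im(\tau_0)\Im(\tau_1)\geq c_F$ for some $c_F>0$. Second, the unit-scaling matrix $\smtx{u}{0}{0}{u^{-1}}\in\Gamma$, where $u$ is a totally positive fundamental unit, rescales $(\Im\tau_0,\Im\tau_1)$ by $(u_0^2,u_0^{-2})$ and thus preserves the product; choosing the right power of $u$ rebalances the two imaginary parts so that each exceeds $\sqrt{c_F}/u_0=:\epsilon_F$.

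To finish, given $\epsilon_0<\epsilon_F$ I subdivide $[x_0,y_0]$ into pieces $[x_0^{(i)},x_0^{(i+1)}]$ and $[x_1,y_1]$ into pieces $[x_1^{(j)},x_1^{(j+1)}]$, apply the reduction lemma to the center $(\tau_0^{ij},\tau_1^{ij})$ of each sub-rectangle to get $\gamma_{ij}\in\Gamma$ sending that center into the $\epsilon_F$-region, and conclude that when the subdivisions are fine enough in the hyperbolic metric, all four corners of the $\gamma_{ij}$-translated sub-rectangle remain within a small hyperbolic neighborhood of the translated center and hence have imaginary parts at least $\epsilon_0$. Summing gives the decomposition \eqref{eq: eq del main theorem}.

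The main obstacle is the quantitative uniform control required in this last step: the Lipschitz constant of the action of $\gamma_{ij}$ on each $\mathcal{H}$-factor depends on its matrix entries, so a priori one needs uniform bounds on those entries for the reduction elements that actually arise. I would handle this by noting that the reduction step only requires $\gamma_{ij}$'s taken from a finite set of cosets depending on a bounded compact region containing $(x_0,x_1)$, $(x_0,y_1)$, $(y_0,x_1)$, $(y_0,y_1)$; this yields the needed uniform Lipschitz constant and pins down explicitly how fine the subdivision must be, which is also what is required to make the construction algorithmically effective for the applications in the sequel.
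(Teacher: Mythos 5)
Your plan takes a genuinely different route from the paper. The paper proves the theorem via an adaptive ``peeling'' algorithm (its Lemma on breaking the integrals): at each step it reduces one corner of the current integral into a good region and then, via a three-way case analysis using an intermediate constant $\epsilon_1=\sqrt{\epsilon_0\epsilon_F}$, splits the integral into at most three pieces, the first of which already satisfies $\epsilon\geq\epsilon_0$; termination is proved by an area/hyperbolic-length argument on $\slims{x_0}{y_0}{x_1}{y_1}$. You instead lay down a \emph{uniform} grid: subdivide the geodesics $\rho(x_0,y_0)$ and $\rho(x_1,y_1)$ into hyperbolically short pieces, reduce (a point of) each grid cell, and conclude all four corners are good. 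Both rest on essentially the same reduction lemma. The paper's Lemma~\ref{lemma: moving to good domain} bounds only the product $\Im(\gamma_0z_0)\Im(\gamma_1z_1)$, whereas you assert the stronger bound on the minimum of the two imaginary parts; this stronger form is indeed available, by first obtaining the product bound and then rescaling by a suitable power of $\gamma_u=\smtx{u}{0}{0}{u^{-1}}$ to force $u_0^{-2}\le\Im(\gamma_1 z_1)/\Im(\gamma_0z_0)\le u_0^2$ (rescaling preserves the product, so both factors end up $\geq\sqrt{c_F}/u_0$). The paper's adaptive scheme is designed for computational efficiency, since it tends to produce fewer pieces in practice; your uniform grid is conceptually simpler for a pure existence proof.

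Your closing concern about Lipschitz constants and matrix entries is, however, unnecessary and points at an illusory obstacle. The hyperbolic metric on $\cH$ is $\SL_2(\R)$-invariant, so the $\gamma_{ij}$ act as \emph{isometries} on each factor, with ``Lipschitz constant'' exactly $1$ regardless of their entries. Concretely, from $\cosh d(z,w)\geq\tfrac12\bigl(\Im z/\Im w+\Im w/\Im z\bigr)$ one gets $\Im z\geq e^{-\rho}\Im w$ whenever $d(z,w)\leq\rho$, and this is preserved by applying $\gamma_{ij}$. Thus if each grid cell has hyperbolic diameter $\leq\rho_0$ in each factor and the reduced reference point of a cell has both imaginary parts $\geq\epsilon_F$, then after applying $\gamma_{ij}$ all four corners have imaginary parts $\geq\epsilon_F e^{-\rho_0}$, which is $\geq\epsilon_0$ once $\rho_0\leq\log(\epsilon_F/\epsilon_0)$. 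Since $d(x_0,y_0)$ and $d(x_1,y_1)$ are finite, the grid has finitely many cells, finishing the argument. No finite-coset bookkeeping or entry-size bounds are needed; replacing your final paragraph with this isometry observation makes the proof both complete and considerably cleaner.
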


 Observe that in the integrals considered in Theorem \ref{th: main} the four limits of integration lie in $\cH$. By
 \eqref{eq: from semidefinite to definite}  integrals of the form
$\int_{x_0}^{y_0}\int_{x_1}^{\infty}\omega_f^+$ are involved in the computation of ATR points. One
can choose any $y_1\in\cH$ whose imaginary part is large enough to satisfy that
\[
 \int_{x_0}^{y_0}\int_{x_1}^{\infty}\omega_f^+=\int_{x_0}^{y_0}\int_{x_1}^{y_1}\omega_f^++\int_{x_0}^{y_0}\int_{y_1}^{
\infty}\omega_f^+,\ \ \text{ with  } \Im(x_0)\Im(y_1),\Im(y_0)\Im(y_1)>\epsilon_F,
\]
and apply Theorem \ref{th: main} to the integral $\int_{x_0}^{y_0}\int_{x_1}^{y_1}\omega_f^+$.

We devote the rest of the section to prove Theorem \ref{th: main}, which as we will see can be made effective and
algorithmic. 

It will be useful for us to regard $F$ as a subset of $\R^2$ by means of $v_0\times v_1$. Let $\|\cdot\|$ denote the
norm on $\R^2$ given by
\[
\|(x_0,x_1)\| = \max\{|x_0|,|x_1|\}.
\]
The basic ingredient in the proof of \ref{th: main} is the
following classical result.
\begin{lemma}\label{lemma: freitag}
  There exists a constant $C_F$, only depending on $F$, such that for each $x\in\R^2$ and for each $0<\delta<1$ there
are elements
$c,d\in\cO_K$, with $c\neq 0$, such that
\[
\| cx+d\|\leq \delta,\quad \|c\|\leq \frac {C_F}{\delta}.
\]
\end{lemma}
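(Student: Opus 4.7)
The plan is to deduce this from Minkowski's theorem applied to a suitable lattice in $\R^4$. First I would view $\cO_F$ as a rank-$2$ lattice in $\R^2$ via $v_0\times v_1$, which is well known to have covolume $\sqrt{D}$. Next, for the given $x \in \R^2$ consider the linear bijection $T\colon \R^4\to\R^4$ defined by $T(c,d)=(c,\,cx+d)$, where $cx$ denotes coordinatewise multiplication. In the coordinates $(c_0,c_1,d_0,d_1)$ this map is lower triangular with $1$'s on the diagonal, hence has determinant $1$. Consequently the image lattice $\Lambda_x := T(\cO_F\oplus\cO_F)\subset\R^4$ has covolume $D$, and crucially this covolume does not depend on $x$.

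Next, I would apply Minkowski's theorem to $\Lambda_x$ together with the symmetric convex box
\[
B_\delta=\{(y,z)\in\R^2\times\R^2 : \|y\|\le C_F/\delta,\ \|z\|\le\delta\},
\]
which has volume $(2C_F/\delta)^2(2\delta)^2=16C_F^2$, independent of $\delta$. Choosing any constant $C_F$ with $16C_F^2 > 2^4 D$, that is $C_F > \sqrt{D}$ (one may take for instance $C_F=2\sqrt{D}$), Minkowski's theorem produces a nonzero point $(c,\,cx+d)\in\Lambda_x\cap B_\delta$. This yields elements $c,d\in\cO_F$, not both zero, satisfying $\|c\|\le C_F/\delta$ and $\|cx+d\|\le\delta$.

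The final step is to rule out $c=0$: if $c=0$ then the surviving constraint is $\|d\|\le\delta<1$, but any nonzero $d\in\cO_F$ has absolute norm $|d_0 d_1|\ge 1$ (being the norm of a nonzero algebraic integer), so $\|d\|^2\ge|d_0||d_1|\ge 1$, which would force $d=0$ and contradict the nontriviality of the lattice point. Hence $c\ne 0$, as required. No genuine obstacle is anticipated; the only substantive point is the unimodularity of $T$, which is what makes the covolume of $\Lambda_x$ independent of $x$ and thereby allows the constant $C_F$ to depend only on $F$.
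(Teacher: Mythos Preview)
Your argument via Minkowski's theorem is correct. The unimodularity of $T$ is exactly the right observation, the volume computation is sound, and the exclusion of $c=0$ using $|d_0d_1|\ge 1$ for nonzero $d\in\cO_F$ is valid since $\max(|d_0|,|d_1|)^2\ge |d_0||d_1|$.

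The paper takes a different route: it gives a Dirichlet-style pigeonhole argument. One covers a fundamental parallelogram $P$ for $\cO_F\subset\R^2$ by $N\lesssim \operatorname{area}(P)/\delta^2$ boxes of side $\delta$, enumerates more than $N$ elements $c_n\in\cO_F$ with $\|c_n\|\le C_F/(2\delta)$, reduces each $c_nx$ modulo $\cO_F$ into $P$, and finds two indices landing in the same box; the difference gives the required $c,d$. Both arguments yield essentially the same constant $C_F\approx\operatorname{area}(P)=\sqrt{D}$, so neither is sharper. The practical difference is that the paper's proof is deliberately algorithmic: it produces an explicit procedure to locate $c$ and $d$, which is what the authors need later for actually computing the matrices $\gamma\in\Gamma$ in Lemma~\ref{lemma: moving to good domain}. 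Your Minkowski argument is cleaner as a pure existence proof but does not immediately hand you an algorithm (one would have to invoke an effective version such as an LLL-type search to make it constructive).
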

\begin{proof}
  This is \cite[Lemma 3.6]{Fr}.  We rewrite the proof in an algorithmic fashion in order to give an approximation to 
$C_F$ (see Remark \ref{remark: explicit constants} below), as
well as to give an algorithm to find the elements $c$ and $d$. 

Consider a fundamental parallelogram $P$ for $\cO_F$ as a
subgroup of $\R^2$. Let $U_1,\ldots ,U_N$ be boxes of side $\delta$ that cover $P$. It is easy to see that there
is a constant $N'$, depending only on $F$, such that $N$ can be taken to be $\leq N'/\delta^2$.

For each positive real $r$, consider the set
\[
S_F(r) = \left\{ c \in \cO_F ~|~ \|c\|< r\right\}.
\]

A well-known result in Ehrhart theory (see e.g.~\cite[Theorem~2.9]{Matthias-Sinai}) implies that there exists a constant
$C_F>0$, which depends on $F$ but not on $\delta$, such that
\[
\# S_F\left(\frac{C_F}{2\delta}\right) > \frac{N'}{\delta^2}\geq  N.
\]
 Consider now an ordering $\{c_n\}_{n\geq 1}$ of the elements of $S_F(C_F/(2\delta))$. For each of the $c_n$, find $d_n\in\cO_F$ such that
\[
c_nx+d_n\in P,
\]
and set $i(n)$ to be the integer such that $c_nx+d_n\in U_{i(n)}$. By the pigeonhole principle the sequence
$\{i(n)\}_{n}$ will have a repetition, say $i(n_1)=i(n_2)$.  Therefore
\[
\|(c_{n_1}-c_{n_2})x+(d_{n_1}-d_{n_2})\| <\delta,
\]
and
\[
\|c_{n_1}-c_{n_2}\|\leq \|c_{n_1}\|+\|c_{n_2}\|\leq \frac{C_F}{2\delta}+\frac{C_F}{2\delta} = \frac{C_F}{\delta}.
\]
The seeked elements are thus $c=c_{n-1}-c_{n-2}$ and $d=d_{n-1}-d_{n-2}$.
\end{proof}
\begin{remark}\label{remark: explicit constants}
  In our applications, the parameter $\delta$ will be small enough so that the quantity $N$ in the above proof can be
approximated by $\operatorname{area}(P)/\delta^2$. Also, \cite[Theorem~2.9(b)]{Matthias-Sinai} gives in this case that
\[
\# S_F\left(\frac{r}{2\delta}\right)\geq \frac{r^2}{4\delta^2}\frac{4}{\operatorname{area}(P)} +1.
\]
Therefore, it is enough for $\# S_F(\frac{r}{2\delta})$ to be larger than $N$ that
\[
\frac{r^2}{4\delta^2}\frac{4}{\operatorname{area}(P)}\geq \frac{\operatorname{area}(P)}{\delta^2}.
\]
This yields an approximate  value of $C_F \approx \operatorname{area}(P)$, which is good enough for our purposes.
Note that this area is easily calculated: if $\cO_F=\Z \oplus \Z w$, then $\operatorname{area}(P)=|w_0-w_1|$.
\end{remark}
We define $$\epsilon_F=\frac{u_0}{C_F(1+u_0^2)}.$$ In the next lemmas we prove that this can, indeed, be taken as the 
 constant $\epsilon_F$ of Theorem \ref{th: main}.

\begin{lemma}\label{lemma: moving to good domain}
Let $z=(z_0,z_1)$ be an element in $\cH\times \cH$. There exists a matrix $\gamma\in \Gamma$ such
that 
\[
  \left( \Im(\gamma_0 z_0)\Im(\gamma_1 z_1)\right)^{1/2}\geq \epsilon_F.
\]
\end{lemma}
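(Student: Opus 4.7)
The plan is to produce the desired $\gamma$ as a product $\gamma'\tilde u^k$, where $\tilde u = \smtx{u}{0}{0}{u^{-1}} \in \Gamma$ is used to normalize the ratio $\Im(z_0)/\Im(z_1)$, and $\gamma'$ is built from the elements $c,d$ delivered by Lemma \ref{lemma: freitag} applied to the real parts of $z$. Throughout I write $y_i := \Im(z_i)$.

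The first step is the normalization. Since $u_0u_1 = -1$, the action of $\tilde u^k$ on $\mathcal H^2$ is $(z_0,z_1)\mapsto (u_0^{2k}z_0,\,u_0^{-2k}z_1)$, which preserves $y_0y_1$ while multiplying the ratio $y_0/y_1$ by $u_0^{4k}$. A short calculation in logarithms shows that for some $k\in\Z$ the translated ratio lies in $[u_0^{-2},u_0^2]$. Replacing $z$ by $\tilde u^k z$ (and keeping the same notation for $y_0,y_1$), I may thus assume $u_0^{-2}\leq y_0/y_1\leq u_0^2$, which yields
\[
\frac{(y_0+y_1)^2}{y_0y_1} \;=\; \frac{y_0}{y_1}+2+\frac{y_1}{y_0} \;\leq\; u_0^2+2+u_0^{-2} \;=\; \frac{(1+u_0^2)^2}{u_0^2}.
\]

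The second step splits into cases depending on the size of $Y := \sqrt{y_0y_1}$. If $Y\geq 1/C_F$, then $y_0y_1\geq 1/C_F^2\geq \epsilon_F^2$ (using $(1+u_0^2)/u_0\geq 2$), and $\gamma:=\tilde u^k$ already satisfies the conclusion. Otherwise $\delta:=\sqrt{C_F\,Y}$ lies in $(0,1)$, so Lemma \ref{lemma: freitag} applied to $x:=(\Re(z_0),\Re(z_1))$ furnishes $c,d\in\cO_F$ with $c\neq 0$, $\|cx+d\|\leq \delta$, and $\|c\|\leq C_F/\delta$. After dividing $(c,d)$ by their g.c.d.\ $e\in\cO_F$---under which the quantity $|c_0z_0+d_0|^2|c_1z_1+d_1|^2$ is divided by $\Nm(e)^2\geq 1$, so only decreases---I may further assume $\gcd(c,d)=1$. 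Since $\cO_F$ is a PID, I can then choose $a,b\in\cO_F$ with $ad-bc=1$ and form $\gamma':=\smtx{a}{b}{c}{d}\in\Gamma$.

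Finally, the desired estimate follows by direct computation. From $|c_i\Re(z_i)+d_i|\leq \delta$ and $|c_i|\leq C_F/\delta$ I obtain
\[
|c_iz_i+d_i|^2 \;\leq\; \delta^2+\frac{C_F^2}{\delta^2}y_i^2 \;=\; \frac{C_F}{Y}(Y^2+y_i^2),
\]
and, using $Y^2=y_0y_1$, one checks $(Y^2+y_0^2)(Y^2+y_1^2)=y_0y_1(y_0+y_1)^2$. Hence
\[
\Im(\gamma_0'z_0)\,\Im(\gamma_1'z_1) \;=\; \frac{y_0y_1}{|c_0z_0+d_0|^2|c_1z_1+d_1|^2} \;\geq\; \frac{y_0y_1}{C_F^2(y_0+y_1)^2} \;\geq\; \frac{u_0^2}{C_F^2(1+u_0^2)^2} \;=\; \epsilon_F^2,
\]
so $\gamma:=\gamma'\tilde u^k$ completes the argument. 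The main subtlety will be choosing the correct normalization interval $[u_0^{-2},u_0^2]$---forced by the requirement that the bound on $(y_0+y_1)^2/(y_0y_1)$ combined with the Freitag estimate yield exactly the stated $\epsilon_F$---and carefully verifying that the passage to a coprime pair $(c,d)$ does not weaken the product estimate.
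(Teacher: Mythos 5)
Your argument is correct and takes essentially the same route as the paper: you invoke Lemma \ref{lemma: freitag} with the same (optimal) choice $\delta = (C_F^2\, y_0 y_1)^{1/4}$, reduce to a coprime pair $(c,d)$ in the same way, and control the ratio $y_0/y_1$ by the same unit matrix $\smtx{u}{0}{0}{u^{-1}}$. The only difference is cosmetic: you normalize the ratio first and case-split on $\delta<1$, while the paper computes with a free $\delta$, optimizes, and normalizes at the end.
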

\begin{proof}
Let $z_j=r_j+is_j$ and let $r=(r_0,r_1)$.  If $s_0s_1\geq\epsilon_F^2$ one may take $\gamma = 1$ and there result is
obvious, so assume from now on that $s_0s_1< \epsilon_F^2$.  By Lemma \ref{lemma: freitag} for each $0<\delta<1$ there
exist $c',d'\in
\cO_F$ with $c'\neq 0$ and such that 
\[
 \| c'r+d' \| \leq \delta,\quad  \| c'\| \leq \frac{C_F}{\delta}.
\]
We have that
\begin{equation*}
\left( (c'_0r_0+d'_0)^2+c_0'^2s_0^2\right)\left( (c'_1r_1+d'_1)^2+c_1'^2s_1^2\right)\leq \left(
\delta^2+\frac{C_F^2s_0^2}{\delta^2} \right) \left(
\delta^2+\frac{C_F^2s_1^2}{\delta^2} \right).
\end{equation*}
Let $g=\gcd(c',d')$ and let $n=\Nm_{K/\Q}(g)$ . If we let $c=c'/g$ and $d=d'/g$ then
\begin{align*} 
\left( (c_0r_0+d_0)^2+c_0^2s_0^2\right)\left( (c_1r_1+d_1)^2+c_1^2s_1^2\right)&\leq \frac{1}{n^2}\left(
\delta^2+\frac{C_F^2s_0^2}{\delta^2} \right) \left(
\delta^2+\frac{C_F^2s_1^2}{\delta^2} \right)\\ &\leq\left(
\delta^2+\frac{C_F^2s_0^2}{\delta^2} \right) \left(
\delta^2+\frac{C_F^2s_1^2}{\delta^2} \right).
\end{align*}
Since $\gcd(c,d)=1$ there exists  $\gamma\in \Gamma$ having $(c,d)$ as bottom row and
\begin{align*}
  \Im(\gamma_0 z_0)\Im(\gamma_1
z_1)&=\frac{s_0}{|c_0z_0+d_0|^2}\frac{s_1}{|c_1z_1+d_1|^2}\\
&=\frac{s_0s_1}{\left((c_0r_0+d_0)^2+c_0^2s_0^2\right)\left((c_1r_1+d_1)^2+c_1^2s_1^2\right)}\\
&\geq\frac{s_0s_1}{\left(\delta^2+\frac{C_F^2s_0^2}{\delta^2}\right)\left(\delta^2+\frac{C_F^2s_1^2}{\delta^2}\right)}.
\end{align*}
We choose $\delta$ that maximizes this expression. The optimal value for $\delta$ turns out to be
\[
\delta=(C_F^2s_0s_1)^{1/4}.
\]
Note that $\delta<1$ since $s_0s_1<\epsilon_F^2$. With this value of $\delta$ the above inequality gives
\[
\left( \Im(\gamma_0 z_0)\Im(\gamma_1 z_1)\right)^{1/2}\geq \frac{1}{2C_F}\frac{G(s_0,s_1)}{A(s_0,s_1)},
\]
where $G(s_0,s_1)$ and $A(s_0,s_1)$ are the geometric and arithmetic means, respectively. Of course, for this quantity
to be not too small we should ensure that the ratio $G/A$ is not too small. That is, $s_0$ and $s_1$ should be
close. This can be done by the action of the matrix $\gamma_u=\mtx u 0 0{u^{-1}}$, which
guarantees that:
\begin{equation}\label{eq: ratio between the imaginary parts}
u_0^{-2} \leq \frac{s_1}{s_0}\leq u_0^2.
\end{equation}
Therefore one obtains (the worst case is when the ratio is at any extreme):
\begin{equation}\label{eq: inequality AG}
\frac{G(s_0,s_1)}{A(s_0,s_1)}\geq \frac{2u_0}{1+u_0^2}, 
\end{equation}
and the result follows.
\end{proof}

\begin{remark}\label{rk: the actual eps_F}
 A similar argument to that of Lemma  \ref{lemma: moving to good domain} shows that for any $(z_0,z_1)\in \cH\times\cH$
there exists $\gamma\in\tilde\Gamma$ such that
\begin{align*}
\left( \Im(\gamma_0 z_0)\Im(\gamma_1 z_1)\right)^{1/2}\geq \frac{\sqrt{u_0}}{C_F(1+u_0)}.
\end{align*}
Indeed,
in this case one can improve \eqref{eq: ratio between the imaginary parts} to
\[
 u_0^{-1} \leq \frac{y_1}{y_0}\leq u_0
\]
by using the action of $\tilde\gamma_u=\mtx u 0 0 1\in \tilde\Gamma$. We will apply this remark to the computation of
ATR points, because the integrals $\int_{x_0}^{y_0}\int_{x_1}^{y_1}\omega_f^+$ are $\tilde\Gamma$-invariant, and then we
can take $\epsilon_F$ in Theorem~\ref{th: main} to be 
\begin{equation}\label{eq: true epsilon_F}
 \epsilon_F=\frac{\sqrt{u_0}}{C_F(1+u_0)}.
\end{equation}

\end{remark}
 
Given $x,y \in \cH$ denote by $\rho(x,y)$ the geodesic in $\cH$ joining $x$ and $y$. We also let $d(x,y)$ be the
\emph{hyperbolic distance} between $x$ and $y$. This distance is invariant under the action of $\SL_2(\R)$, and it is
given
by the formula
\begin{equation*}
 \cosh d(x,y)=1+\frac{\mid x-y \mid^2}{2\Im(x)\Im(y)}.
\end{equation*}
Observe that 
\begin{equation}\label{eq: inequality distance}
 d(x,y)\geq \cosh^{-1}\left(1+\frac{(\Im(x)/\Im(y)-1)^2}{2\Im(x)/\Im(y)} \right).
\end{equation}

For $x_0,y_0,x_1,y_1\in \cH$ we denote by $\slims{x_0}{y_0}{x_1}{y_1}$ the image under the quotient map
$\cH^2\sqcup\{\infty\}\ra \Gamma\setminus(\cH^2\sqcup\{\infty\})$ of the region $\rho(x_0,y_0)\times
\rho(x_1,y_1)\subset
\cH\times \cH$. Since  $\SL_2(\R)$ acts by isometries on $\cH$ we have that
$\slims{x_0}{y_0}{x_1}{y_1}=\slims{\gamma_0x_0}{\gamma_0y_0}{\gamma_1x_1}{\gamma_1y_1}$ for all $\gamma\in \Gamma$ and
the \emph{area} of
$\slims{x_0}{y_0}{x_1}{y_1}$ is given by $d(x_0,y_0)d(x_1,y_1)$.

 For a given $\epsilon_0<\epsilon_F$,
let $\epsilon_1:=\sqrt{\epsilon_0\epsilon_F}$. This  auxiliary quantity will be used in the proof of Theorem
\ref{th: main}. 
We also define quantities $d_0$, $d_1$ and $d_\text{min}$ as
\[
 d_i=\cosh^{-1}\left( 1+\frac{((\epsilon_F/\epsilon_i)^2-1)^2}{2(\epsilon_F/\epsilon_i)^2} \right) \text{ for
}i\in\{0,1\}, \text{ and }
d_\text{min}=\min\{d_1,d_2\}.
\]

\begin{lemma} \label{lemma: breaking the integrals}
Let $x_0,y_0,x_1,y_1\in\cH$ be such that $\epsilon(x_0,y_0,x_1,y_1)<\epsilon_0<\epsilon_F$.
Then $\int_{x_0}^{y_0}\int_{x_1}^{y_1}\omega_f$
can be written as a sum  of either two or three integrals along sub-regions of $\slims{x_0}{y_0}{x_1}{y_1}$ with zero-measure intersection, in which the first one is of the form
$\int_{x'_0}^{y'_0}\int_{x'_1}^{y'_1}\omega_f$
with $x_0',y_0',x_1',y_1'$ satisfying the following properties:
\begin{enumerate}[(1)]
\item $\epsilon(x'_0,y'_0,x'_1,y'_1)\geq \epsilon_0$
\item  for each $i\in\{0,1\}$ either $(x_i',y_i')=(x_i,y_i)$ or $d(x'_i,y'_i)\geq d_\text{min}$;
\end{enumerate}
\end{lemma}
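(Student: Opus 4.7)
The strategy is to use the $\Gamma$-invariance of both the region $\slims{x_0}{y_0}{x_1}{y_1}$ and the integral $\int_{x_0}^{y_0}\int_{x_1}^{y_1}\omega_f$ to shift a strategic corner of the rectangle into a high-imaginary-part region via Lemma~\ref{lemma: moving to good domain}, and then construct the desired sub-rectangle by truncating each geodesic from that good corner outward at a carefully chosen threshold so that all four pairwise products of imaginary parts at the new corners exceed~$\epsilon_0^2$.

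Concretely, I would apply Lemma~\ref{lemma: moving to good domain} to the corner $(x_0,x_1)$ (WLOG the one whose product of imaginary parts is smallest) to obtain $\gamma\in\Gamma$ with $\sqrt{\Im(\gamma_0 x_0)\Im(\gamma_1 x_1)}\geq \epsilon_F$. Replacing the rectangle by its $\gamma$-image (valid by the $\Gamma$-invariances just mentioned) and using the balancing by $\gamma_u$ built into the proof of Lemma~\ref{lemma: moving to good domain}, I may assume $\Im(x_0)\Im(x_1)$ is close to $\epsilon_F^2$ with the two factors of comparable magnitude. Set $x'_0=x_0$ and $x'_1=x_1$. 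Now walk along each geodesic $\rho(x_i,y_i)$ from $x_i$ toward $y_i$: if the imaginary part drops to the threshold $\tau_i:=\epsilon_1^2/\Im(x_{1-i})$, let $y'_i$ be the first such crossing; otherwise set $y'_i=y_i$.

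Property~(1) then follows by inspecting the four pairwise corner products: $\Im(x'_0)\Im(x'_1)=\Im(x_0)\Im(x_1)\geq \epsilon_F^2>\epsilon_0^2$; each mixed product $\Im(x'_i)\Im(y'_{1-i})$ is at least $\Im(x_i)\tau_{1-i}=\epsilon_1^2=\epsilon_0\epsilon_F>\epsilon_0^2$; and $\Im(y'_0)\Im(y'_1)\geq \tau_0\tau_1=\epsilon_1^4/(\Im(x_0)\Im(x_1))\geq \epsilon_0^2$, using the upper bound $\Im(x_0)\Im(x_1)\leq \epsilon_F^2$ delivered by Step~1. For property~(2), when a genuine truncation occurs at $y'_i$, the ratio $\Im(x_i)/\Im(y'_i)\geq (\epsilon_F/\epsilon_1)^2$ combined with the inequality \eqref{eq: inequality distance} gives $d(x'_i,y'_i)=d(x_i,y'_i)\geq d_1\geq d_\text{min}$. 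Finally, the complement of $[x'_0,y'_0]\times[x'_1,y'_1]$ inside the transformed rectangle decomposes into at most two additional pieces --- one for each coordinate in which a genuine truncation occurred --- producing the claimed two or three integrals.

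The main technical hurdle is Step~1: Lemma~\ref{lemma: moving to good domain} directly gives only the lower bound $\Im(x_0)\Im(x_1)\geq\epsilon_F^2$, whereas the estimate $\tau_0\tau_1\geq\epsilon_0^2$ at the far corner $(y'_0,y'_1)$ also requires an upper bound of the same order. Bridging this gap --- for instance by passing via continuity/intermediate value theorem to a point of the rectangle on the level set $\Im(z_0)\Im(z_1)=\epsilon_F^2$, by specifying $(z_0,z_1)$ more carefully before invoking Lemma~\ref{lemma: moving to good domain}, or by adaptively redefining the thresholds $\tau_i$ when the product is too large --- is the key technical point that needs care.
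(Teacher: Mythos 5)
Your skeleton is right and matches the paper's: push one corner up via Lemma~\ref{lemma: moving to good domain}, truncate each geodesic where the imaginary part drops through a threshold, and collect the leftover pieces. But the gap you flag at the end is a genuine gap, not a loose end to be tidied later, and it is exactly the point of the paper's more elaborate case analysis. With your fixed thresholds $\tau_i=\epsilon_1^2/\Im(x_{1-i})$, the far-corner product is $\tau_0\tau_1=\epsilon_1^4/(\Im(x_0)\Im(x_1))=\epsilon_0^2\epsilon_F^2/(\Im(x_0)\Im(x_1))$, and Lemma~\ref{lemma: moving to good domain} only gives $\Im(x_0)\Im(x_1)\geq\epsilon_F^2$ -- there is no upper bound, and the $\gamma$ it produces can overshoot by an arbitrary factor (the $\gamma_u$ balancing only makes the two $s_i$ comparable, it does not pin down the product). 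So $\tau_0\tau_1$ can be strictly below $\epsilon_0^2$, and your claimed inequality $\Im(y'_0)\Im(y'_1)\geq\epsilon_0^2$ fails precisely when both truncations occur. The sentence "I may assume $\Im(x_0)\Im(x_1)$ is close to $\epsilon_F^2$" is the step that isn't justified.

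The paper closes this gap by making the second threshold \emph{adaptive} rather than fixed: it first truncates one coordinate at $\Im(t_1)\Im(x_0)=\epsilon_1^2$, and only if $\Im(y_0)\Im(t_1)<\epsilon_0^2$ does it truncate the other coordinate, this time at the conditional level $\Im(t_0)\Im(t_1)=\epsilon_0^2$ (i.e.\ $\tau_0=\epsilon_0^2/\Im(t_1)$, which depends on where the first cut landed). With this choice a potentially large $\Im(x_0)\Im(x_1)$ actually \emph{helps}: the remaining cross product becomes
\[
\Im(x_1)\Im(t_0)=\Im(x_1)\cdot\frac{\epsilon_0^2}{\Im(t_1)}
=\Im(x_0)\Im(x_1)\cdot\frac{\epsilon_0^2}{\epsilon_1^2}
\geq \epsilon_F^2\cdot\frac{\epsilon_0^2}{\epsilon_1^2}=\epsilon_0\epsilon_F>\epsilon_0^2,
\]
and the ratios $\Im(x_0)/\Im(t_0)\geq(\epsilon_F/\epsilon_0)^2$ and $\Im(x_1)/\Im(t_1)\geq(\epsilon_F/\epsilon_1)^2$ still deliver property~(2) via~\eqref{eq: inequality distance}. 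The paper also separates this into three labelled cases according to which of $\Im(x_0)\Im(y_1)$, $\Im(x_1)\Im(y_0)$, $\Im(y_0)\Im(y_1)$ is small, since case~(c) (only the far corner is bad) needs a different, single truncation. Your suggestion (3), "adaptively redefining the thresholds," is the one that works; if you carry it out in this nested form, the proof goes through.

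One further small point: when you cut only once, you get two subregions; when you cut twice in the nested way, the complement of the good rectangle in $\slims{x_0}{y_0}{x_1}{y_1}$ is an L-shape naturally split into two strips, giving the claimed count of two or three pieces -- the same bookkeeping as in the paper's displayed decompositions.
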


\begin{proof}
Actually we will prove that $x_0',y_0',x_1',y_1'$ can be chosen to satisfy 
  \begin{equation}\label{eq: more restrive contitions on de x_i'}\Im(x'_0)\Im(x'_1)\geq {\epsilon_F^2},\ 
   \Im(x'_0)\Im(y'_1)\geq {\epsilon_1^2},\ 
   \Im(y'_0)\Im(y'_1)\geq {\epsilon_0^2},\ 
   \Im(y'_0)\Im(x'_1)\geq {\epsilon_0^2}.\end{equation}
Observe that \eqref{eq: more restrive contitions on de x_i'} implies condition {\it (1)} of the lemma.

By Lemma \ref{lemma: moving to good
domain} and transforming the limits $x_0,y_0,x_1,y_1$ of the integral under an appropriate element of $\Gamma$, we can
assume that 
\begin{equation}\label{eq: condition x0 x1}
\Im(x_0)\Im(x_1)\geq {\epsilon_F^2}.
\end{equation}
If $\Im(x_0)\Im(y_1)\geq {\epsilon_1^2}$, $\Im(x_1)\Im(y_0)\geq {\epsilon_1^2}$ and $\Im(y_0)\Im(y_1)\geq
{\epsilon_1^2}$ then $\epsilon(x_0,y_0,x_1,y_1)>\epsilon_0$. So we can suppose that
one of
these conditions is not satisfied. We divide the
proof in three cases, that we label  (a), (b) and  (c), according to the condition which is not satisfied.

\begin{enumerate}[(a)]
 \item If $\Im(x_0)\Im(y_1)< {\epsilon_1^2}$. Then we break the integral as
\begin{equation}\label{eq: case 3}
 \int_{x_0}^{y_0}\int_{x_1}^{y_1}\omega_f=\int_{x_0}^{y_0}\int_{x_1}^{t_1}\omega_f+\int_{x_0}^{y_0}\int_{t_1}^{y_1}
\omega_f,
\end{equation}
where $t_1\in\rho(x_1,y_1)$ satisfies that 
\begin{equation}
\label{eq: condition t1 case a}\Im (t_1)\Im(x_0)={\epsilon_1^2}.
\end{equation}
Observe that this is possible because $\Im(x_1)\geq \frac{\epsilon_F^2}{\Im(x_0)}>\frac{\epsilon_1^2}{\Im(x_0)}$ and
$\Im(y_1)<\frac{\epsilon_1^2}{\Im(x_0)}$, so that the geodesic between $x_1$ and $y_1$ in $\mathcal H_1$ intersects the
line
$\Im(z)=\frac{\epsilon_1^2}{\Im(x_0)}$, and we take $t_1$ to be the intersection point. 

If $\Im(y_0)\Im(t_1)\geq {\epsilon_0^2}$ then multiplying this condition by \eqref{eq: condition x0 x1} and
using \eqref{eq: condition t1 case a} we obtain that $\Im(x_1)\Im(y_0)\geq {\epsilon_0^2}$ and \eqref{eq: more restrive
contitions on de x_i'} holds. Moreover, dividing
\eqref{eq: condition x0 x1} by \eqref{eq: condition t1 case a} we find that
\[
 \Im(x_1)/\Im(t_1)\geq (\epsilon_F/\epsilon_1)^2
\]
so that, in view of \eqref{eq: inequality distance}, the first integral satisfies condition {\it (2)}.

If $\Im(y_0)\Im(t_1)<{\epsilon_0^2}$ then we break \eqref{eq: case 3} as
\begin{equation}\label{eq: case 1}
 \int_{x_0}^{y_0}\int_{x_1}^{y_1}\omega_f=\int_{x_0}^{t_0}\int_{x_1}^{t_1}\omega_f+\int_{t_0}^{y_0}\int_{x_1}^{t_1}
\omega_f+\int_{x_0}^{y_0}\int_{t_1}^{y_1}
\omega_f,
\end{equation}
where we take $t_0\in\rho(x_0,y_0)$ such that 
\begin{equation}\label{eq: condition t0 t1}
 \Im(t_0)\Im(t_1)={\epsilon_0^2}.
\end{equation}
This is possible because $\Im(x_0)=\frac{\epsilon_1^2}{\Im(t_1)}>\frac{\epsilon_0^2}{\Im(t_1)}$ and
$\Im(y_0)<\frac{\epsilon_0^2}{\Im(t_1)}$. Now
multiplying \eqref{eq: condition x0 x1} and \eqref{eq: condition t0 t1} and using \eqref{eq: condition t1 case a} we
obtain that $\Im(x_1)\Im(t_0)\geq{\epsilon_0^2}$. In addition, dividing \eqref{eq: condition x0 x1} by \eqref{eq:
condition t0 t1} we find that
\[
 \Im(x_0)/\Im(t_0)\geq  (\epsilon_F/\epsilon_0)^2,
\]
which implies condition {\it (2)} for the first integral.

\item If $\Im(x_1)\Im(y_0)< {\epsilon_1^2}$. This is analogous to the first case:  we break the integral as
 \begin{equation}\label{eq: case 4}
  \int_{x_0}^{y_0}\int_{x_1}^{y_1}\omega_f=\int_{x_0}^{t_0}\int_{x_1}^{y_1}\omega_f+\int_{t_0}^{y_0}\int_{x_1}^{y_1}
 \omega_f,
 \end{equation}
 where $t_0\in \rho(x_0,y_0)$ satisfies that $\Im (t_0)\Im(x_1)={\epsilon_1^2}$. If $\Im(t_0)\Im(y_1)<{\epsilon_0^2}$ then we break further the integral as
 \begin{equation}\label{eq: case 2}
  \int_{x_0}^{y_0}\int_{x_1}^{y_1}\omega_f=\int_{x_0}^{t_0}\int_{x_1}^{t_1}\omega_f+\int_{x_0}^{t_0}\int_{t_1}^{y_1}
 \omega_f+\int_{t_0}^{y_0}\int_{x_1}^{y_1}
 \omega_f,
 \end{equation}
 where we take $t_1\in \rho(x_1,y_1)$ such that  $ \Im(t_0)\Im(t_1)={\epsilon_0^2}$. 

\item If $\Im(y_0)\Im(y_1)<{\epsilon_1^2}$. In this case we can assume that 
\begin{equation}\label{eq: case c inequality}
 \Im(x_0)\Im(y_1)\geq {\epsilon_1^2}\ \text{ and }\   \Im(x_1)\Im(y_0)\geq {\epsilon_1^2},
\end{equation}

because otherwise we are in case (a) or (b). Then we can also suppose that
\begin{equation}\label{eq: condition y0 y1 case c}
 \Im(y_0)\Im(y_1)< {\epsilon_0^2},
\end{equation}
because otherwise $\epsilon(x_0,y_0,x_1,y_1)\geq \epsilon_0$. Then we can break the integral as
\[
 \int_{x_0}^{y_0}\int_{x_1}^{y_1}\omega_f=\int_{x_0}^{t_0}\int_{x_1}^{y_1}\omega_f+\int_{t_0}^{y_0}\int_{x_1}^{y_1}
\omega_f,
\]
where $t_0\in\rho(x_0,y_0)$ satisfies that 
\begin{equation}
\label{eq: condition t0 case c}\Im (t_0)\Im(y_1)={\epsilon_0^2}.
\end{equation}
Observe that
\[
 \Im(t_0)\Im(x_1)=\frac{\epsilon_0^2\Im(x_1)}{\Im(y_1)}>\Im(y_0)\Im(x_1)\geq \epsilon_1^2,
\]
so that the first integral satisfies the conditions of the Lemma, because from \eqref{eq: case c inequality} and
\eqref{eq: condition t0 case c} if follows that $\Im(x_0)/\Im(t_0)\geq
(\epsilon_1/\epsilon_0)^2=(\epsilon_F/\epsilon_1)^2.$
\end{enumerate}
\end{proof}

\subsection*{Proof of Theorem \ref{th: main}} Define the quantity $\epsilon(I)$ as
\[
\epsilon(I)=\epsilon(x_0,y_0,x_1,y_1),\text{ if } I=\int_{x_0}^{y_0}\int_{x_1}^{y_1}\omega_f.
\]
We can compute an expression $I=I_1+\dots + I_n$ with
$\epsilon(I_i)\geq\epsilon_0$ as in  \eqref{eq: eq del main theorem} by repeated application of Lemma \ref{lemma:
breaking the integrals}. To be more precise, we can apply the following algorithmic procedure. 

 We initialize a void list $V$ and a list $W=[I]$. We  denote by $W_0$ the first element in $W$. If $\epsilon(W_0)\geq
\epsilon_0$ we append $W_0$ to $V$ and we remove $W_0$ from $W$. If $\epsilon(W_0)<\epsilon_0$, we apply Lemma
\ref{lemma: breaking the integrals} to write $W_0=I_0+J_1+\dots+J_k$ with $\epsilon(I_0)\geq \epsilon_0$ and $k\leq 2$.
We append $I_0$ to $V$, we remove $W_0$ from $W$ and we append $J_1,\dots,J_k$ to $W$. Then we repeat the process to the
first element $W_0$ in the list $W$, until $W$ is void.

Observe that if this process finishes in a finite number of steps, then  at the end the list $V$ contains integrals
$I_0,I_1,\dots,I_n$ with $I=I_0+\cdots+I_n$ and $\epsilon(I_i)\geq\epsilon_0$ for all $i=1,\dots,n$, as desired. What
remains to be shown, then, is that this procedure cannot be repeated infinitely many times. As we shall now see, this
follows from properties {\it (2)} and {\it (3)} of Lemma \ref{lemma: breaking the integrals}.

First of all, observe that $I$ is the integral of $\omega_f$ along the region $\slims{x_0}{y_0}{x_1}{y_1}$. Applying
Lemma
\ref{lemma: breaking the integrals} to a certain integral $W_0=\int_{r_0}^{s_0}\int_{r_1}^{s_1}\omega_f$ amounts to give
a decomposition of the region $\slims{r_0}{s_0}{r_1}{s_1}\subseteq \slims{x_0}{y_0}{x_1}{y_1}$ into a  union of
regions with zero measure intersection. This
decomposition can be of the following  $4$ types, which correspond to \eqref{eq: case 1}, \eqref{eq: case 2}, \eqref{eq:
case 3} and \eqref{eq: case 4} respectively:
\begin{enumerate}[(i)]
\item $ \slims{r_0}{s_0}{r_1}{s_1}=\slims{r_0}{t_0}{r_1}{t_1}\sqcup \slims{t_0}{y_0}{r_1}{t_1}\sqcup
\slims{r_0}{s_0}{t_1}{s_1}$ with
$d(r_i,t_i)> d_\text{min}$ for $i=0,1$,
\item $ \slims{r_0}{s_0}{r_1}{s_1}=\slims{r_0}{t_0}{r_1}{t_1}\sqcup \slims{r_0}{t_0}{t_1}{r_1}\sqcup
\slims{t_0}{s_0}{r_1}{s_1}$ with
$d(r_i,t_i)> d_\text{min}$ for $i=0,1$,
\item $ \slims{r_0}{s_0}{r_1}{s_1}=\slims{r_0}{s_0}{r_1}{t_1}\sqcup \slims{r_0}{s_0}{t_1}{s_1}$ with
 $d(r_1,t_1)> d_\text{min}$, or
\item $ \slims{r_0}{s_0}{r_1}{s_1}=\slims{r_0}{t_0}{r_1}{s_1}\sqcup\slims{t_0}{s_0}{r_1}{s_1}$ with
 $d(r_0,t_0)> d_\text{min}$.
\end{enumerate}

Each time that an application of Lemma \ref{lemma: breaking the integrals} gives a decomposition of type (i) or (ii), 
the first term is a subregion of $\slims{x_0}{y_0}{x_1}{y_1}$ of area at least $d_\text{min}^2$. Since the area of
$\slims{x_0}{y_0}{x_1}{y_1}$ is finite,
this cannot happen infinitely many times. Therefore, in the algorithmic process described above we can assume that,
after a finite number of steps, all applications of  Lemma \ref{lemma: breaking the integrals} give rise to
decompositions of types (iii) or (iv). 

At this stage, the algorithmic procedure applied to an integral
$W_0$ is as follows. Let $R_0=\slims{r_0}{s_0}{r_1}{s_1}$ be the region associated to $W_0$, and let
$\Delta(R_0)=d(r_0,s_0)+d(r_1,s_1)$. As application of Lemma~\ref{lemma:
breaking the integrals} one obtains a decomposition $R_0=R_1\sqcup S_1$ of type (iii) or (iv). Observe that
$\Delta(S_1)\leq \Delta(R_0)-d_\text{min}$ by property {\it (3)} of Lemma~\ref{lemma: breaking the integrals}. If
$\epsilon(S_1)\geq
\epsilon_0$ then procedure for $W_0$ ends; otherwise one iterates by applying again \ref{lemma: breaking the
integrals}, obtaining $S_1=R_2\sqcup S_2$ with $\Delta(S_2)\leq \Delta(S_1)- d_\text{min}$. It is clear that this procedure
cannot continue indefinitely because $\Delta(S_i)\geq 0$. Therefore, at some step $\epsilon(S_i)\geq \epsilon_0$ and
the process for $W_0$ ends.

\section{Dependence on the continued fractions}\label{sec: Dependence on the continued fractions}
The computation of ATR points is equivalent to the computation of semi-definite integrals of the form
\begin{equation}\label{eq: semidefinite integral}
 \int^{\tau_0}\int_\infty^{c}\omega_f^+, \ \ c\in F.
\end{equation}
As we recalled in \S \ref{subsection: continued fractions trick}, one can write \eqref{eq: semidefinite integral} as a
sum of ordinary $4$-limit integrals by expressing $c$ as a continued fraction with coefficients in $\cO_F$. If the
limits of the resulting ordinary integrals are too close to the real axis then the number of terms to sum for a
prescribed accuracy, and therefore the number of Fourier coefficients to be computed, may be too large. In this case,
the algorithm described in \S \ref{sec: Integrals of Hilbert Modular forms} can be used to express them as sums of
integrals whose limits are uniformly bounded away from the real axis, reducing the number of terms and Fourier
coefficients needed.

If  $F$ is norm-euclidean then the  euclidean algorithm gives an effective procedure for computing continued
fractions. This is the method used in the numerical calculations over $\Q(\sqrt{29})$,
$\Q(\sqrt{37})$ and $\Q(\sqrt{41})$ carried out in \cite{DL}. But this can only be done in a few 
fields: there are only finitely many norm-euclidean real quadratic fields, being $\Q(\sqrt{73})$ the one having largest
discriminant.

 An  algorithm for computing continued fractions in $2$-stage euclidean real quadratic fields was given in \cite{GM}. A
field $F$ is said to be \emph{$2$-stage euclidean} if for every $a,b\in \cO_F$, $b\neq 0$, there exist either:
\begin{enumerate}[(i)]
 \item $q,r\in \cO_F$ with $a=qb+r$ and $\Nm(r)<\Nm(b)$, or
\item \label{eq: 2-stage chain} $q_1,q_2,r_1,r_2\in \cO_F$ with 
$
 a=q_1 b+ r_1,\ \ b=q_2r_1+r_2
$
and $\Nm(r_2)<\Nm(b)$. 
\end{enumerate}

All real quadratic fields of class number $1$ are conjectured to be $2$-stage euclidean (see \cite{cooke}).
Actually, the algorithm of \cite{GM} can also be used to verify that a given $F$ is $2$-stage
euclidean, and this was used to prove that all real quadratic fields of class number $1$ and discriminant up to $8,000$
are indeed $2$-stage euclidean \cite[Theorem 4.1]{GM}.

Unlike the situation encountered in norm-euclidean fields, $2$-stage division chains as in condition \eqref{eq: 2-stage
chain} are not unique. As a consequence,  elements of $F$ admit in general many different continued fraction expansions. This
leads to different expressions of \eqref{eq: semidefinite integral} as a sum of ordinary integrals, whose limits may
have very different imaginary parts. As it will be illustrated
in \S \ref{sec: Numerical verification of Darmon's conjecture} with some explicit examples, numerical experiments
suggest that it is useful to exploit non-uniqueness of continued fractions, by searching for continued fractions
 leading to integrals whose limits have large imaginary parts; or, to be more precise, whose limits give
large values of the quantity $\epsilon$ defined in  \eqref{eq: def de epsilon}. The procedure for computing an
expression such as \eqref{eq: semidefinite integral} is then:
\begin{enumerate}[(1)]
 \item Compute all continued fraction expansions of $c$ given by the algorithm of \cite{GM}, which have length up to a
certain fixed bound. 
\item For each continued fraction, compute the corresponding expression of \eqref{eq: semidefinite integral} as a sum
of ordinary integrals and compute $\epsilon_{\text{min}}$: the minimum of the quantities $\epsilon$ corresponding to the
limits. 
\item\label{step: high eps} Choose the continued fraction giving the highest $\epsilon_{\text{min}}$.
\item\label{step: last} For each of the ordinary integrals appearing in the expression given by the continued fraction
found in the
previous step, compute the quantity $\epsilon$ and apply the algorithm of Theorem~\ref{th: main} if
$\epsilon<\epsilon_F$, with a suitable choice of $\epsilon<\epsilon_0<\epsilon_F$.
\end{enumerate}

We end the section with some remarks about the algorithm above.

\begin{enumerate}[1.]
\item One can exploit non-uniqueness of $2$-stage division chains even if $F$ is euclidean. As the next section
illustrates, this may be beneficial since it usually gives rise to integrals with larger values of $\epsilon$,
providing an improvement even on the curves already considered in \cite{DL}. In some cases, it may even happen that the
$\epsilon_{\text{min}}$ obtained in this way is higher than $\epsilon_F$, in which case it is not necessary to apply the
algorithm provided by Theorem~\ref{th: main}. However, the lack of an a priori estimate of the value
$\epsilon_{\text{min}}$ obtained by the non-uniqueness of division chains trick explains the key importance of
Theorem~\ref{th: main} in treating the cases where $\epsilon_{\text{min}}<\epsilon_F$.

\item In Step (\ref{step: high eps}) we choose the continued fraction giving the highest $\epsilon_{\text{min}}$ 
because experimentally this seems to produce the fewer resulting integrals in step $(4)$. We have no rigorous
explanation
for this fact, although it seems reasonable that better initial conditions give better results.
\item  There is a trade-off between small and
large values of $\epsilon_0$ in Step (\ref{step: last}) above: smaller values yield less
integrals after the breaking process, but each of these integrals requires more Fourier coefficients at the time of
integration; on the other
hand, higher values lead to integrals requiring less Fourier coefficients, but the number of resulting integrals tends
to be higher. Experimentally, we found that the running time of the algorithm is more sensible to the number of needed
Fourier coefficients, so a value of $\epsilon_0$  close to
$\epsilon_F$ seems to be a good choice. For instance, in the implementation of the algorithm used to compute the
numerical examples of the next section, we used $\epsilon_0= 0.81\epsilon_F$, which corresponds to a value of
$\epsilon_1= 0.9\epsilon_F$.
  \end{enumerate}

\section{Numerical verification of Darmon's conjecture}\label{sec: Numerical verification of Darmon's conjecture}

In this section we illustrate the  algorithm described above by calculating approximations to ATR points which add
numerical evidence on top of the one presented in~\cite{DL}. Before detailing the computation of an ATR
point on $E_{509}$, we comment on some calculations of ATR points on three $\Q$-curves that we denote $E_{29}$,
$E_{37}$, 
and $E_{109}$. The curves $E_{29}$ and $E_{37}$ were also considered in \cite{DL}, and we have included them here in
order to compare the computational requirements of the algorithm used in \cite{DL} with the one
presented in this note. The curve $E_{109}$ is an example of a  curve of conductor $1$ defined over a real quadratic
field of
class number $1$ which is not norm-euclidean. Therefore, it was not numerically accessible before, although it is a
$\Q$-curve and algebraic points  can be more efficiently computed by using the
Heegner point method of \cite{DRZ}.

The computations for $E_{29}$, $E_{37}$, and $E_{109}$ were performed on a laptop with \emph{Intel Core\texttrademark{}
i5-2540M} CPU running at $2.60\text{ GHz}$, and $8\text{ GB}$ of memory. For the curve $E_{509}$ we used a
machine equipped with eight \emph{Quad-Core AMD Opteron\texttrademark{} Processor 8384} for a total of $32$ cores running
each at $800\text{ MHz}$, and equipped with $320\text{ GB}$ of memory.
\subsection*{The curve $E_{29}$}
Consider the curve defined over $F=\Q(\sqrt{29})$ and given by the equation
\[
E_{29}\colon y^2 + xy + (5\omega +11) y = x^3,\quad \omega = \frac{1+\sqrt{29}}{2}.
\]
For this field the estimated  $C_F$ is $C_F \simeq 5$ (see Remark~\ref{remark: explicit constants}), which by
\eqref{eq: true epsilon_F} yields $\epsilon_F\simeq 0.0736$.

We consider the ATR field $K=F(\beta)$ with $\beta = \sqrt{9\omega +3}$, for which $E_{29}(K)$ has a non-torsion point
with $x$-coordinate equal to $-1/3$. With the algorithm used in~\cite{DL}, one obtains integrals with
$\epsilon_{\text{min}} \simeq 0.00145$. In order to get
$12$ decimal digits, which is the minimum precision in which the calculations in~\cite{DL} were performed, one would
have needed to find the Fourier  coefficients of all ideals up to norm $N\simeq 6.7\cdot 10^7$. 

Using the non-uniqueness of continued fraction expansions as explained in Section~\ref{sec: Dependence on the continued
fractions}, considering expansions of length up to $5$, we obtained $5$ integrals with imaginary part
$\epsilon_{\text{min}}\simeq
0.0072$, which is almost $5$ times better than before. In order to obtain the same precision one would have to find
the Fourier coefficients of ideals up to norm $N\simeq 2.7\cdot 10^6$, which is almost $25$ times less.

 Since
$\epsilon_{\text{min}}<\epsilon_F$,  we broke further the integrals with the algorithm of Theorem~\ref{th: main} to move
the imaginary parts of the limits close to this theoretical optimal, with a choice of $\epsilon_0=0.81\epsilon_F$
in Step (4) of the algorithm outlined in Section~\ref{sec: Dependence on the continued fractions}. This yielded $539$
integrals with an imaginary part of
$\epsilon_{\text{min}}\simeq 0.0596$, and allowed us to obtain the same precision of $12$ digits by only considering
ideals up to norm $N\simeq 40,000$: an improvement by a factor of $1,675$. By taking ideals of norm up to $40,000$ we
obtained that
\[
J_\tau = 13.2923360157968468468\ldots - 10.78402031269077180934\ldots i,
\]
and $-3\cdot J_\tau$ coincides with  $P$ up to the prescribed accuracy. The calculation took less than two minutes.

\subsection*{The curve $E_{37}$}
Let $E_{37}$ be the curve defined over $F = \Q(\sqrt{37})$ appearing in~\cite{DL} and having equation
\[
E_{37}\colon y^2+y=x^3+2x^2-(19+8\omega)x + 28+11\omega,\quad \omega = \frac{1+\sqrt{37}}{2}.
\]
For this field the constant $C_F$ is approximately equal to $6$. By \eqref{eq: true epsilon_F} we see that
$\epsilon_F\simeq 0.044$. 

We consider the field $K=F(\beta)$, with $\beta = \sqrt{4\omega+10}$, and one of the points computed
by~\cite{DL}, namely
\[
P = (-\beta^2/8-3/4, -\beta^3/8-1/2).
\]
Using the algorithm of \cite{DL} one obtains a minimal imaginary part of $\epsilon_{\text{min}}\simeq 0.0012$, which
means that one has to integrate using the Fourier coefficients up to norm $N\simeq 1.12\cdot 10^8$ for obtaining $12$
digits of precision. 

In order to illustrate the algorithm of Theorem \ref{th: main} we rewrote the integrals provided by the method
of \cite{DL} as a sum involving $328$ integrals (with a choice of $\epsilon_0=0.81\epsilon_F$). The minimal imaginary
part then improved to
$\epsilon_{\text{min}}\simeq \epsilon_0\simeq 0.0359$, which means that to get $12$ digits of precision it is
enough to use ideals of
norm up to $N\simeq 138,000$. This is an improvement by a factor of $815$. In this
case, it took about $7$ minutes to find that
\[
J_\tau = -1.3589031642485772101\ldots + 8.36575277665729384437\ldots i,
\]
which satisfies the equality
\[
5 J_\tau \stackrel{?}{=} -8 P
\]
up to the prescribed accuracy.

\subsection*{The curve $E_{109}$}
In the two remaining subsections we present larger examples that were not available to~\cite{DL}.  First, consider the
curve $E_{109}$ defined over the field $F = \Q(\sqrt{109})$, and given by the equation
\[
E_{109}\colon y^2 + \omega xy = x^3 -(1+\omega)x^2 - (58\omega+245) x -630\omega-2944, \quad
\omega= \frac{1+\sqrt{109}}{2}
\]
Although $E_{109}$ is a $\Q$-curve, the field $F$ is not norm-euclidean and therefore this example was not available
before. For this field we have that $C_F\simeq 10.4$, giving that $\epsilon_F\simeq
0.006$. By using the algorithm of Theorem~\ref{th: main} with, say, $\epsilon_0=0.81\epsilon_F$, one can express any
integral $\int_{x_0}^{y_0}\int_{x_1}^{y_1}\omega_f^+$ as a sum of integrals with $\epsilon\simeq
0.81\cdot\epsilon_F\simeq 0.0048$. In order to compute any such integral with $\epsilon=0.0048$ to a precision of $12$
digits, one needs to sum the
Fourier coefficients of norm up to roughly $2\cdot 10^7$.

Let us  consider the point $P = (3\omega +11, \frac 12\beta - 7\omega-81/2 )$ defined over the field
$K=F(\beta)$, with $\beta=\sqrt{268\omega+1265}$. Exploiting the non-uniqueness of continued fractions, and considering
expansions of length up to $5$, we obtained $8$ integrals with $\epsilon_{\text{min}}\simeq 0.035$, which is roughly
$6$ times higher than $\epsilon_F$. It is not necessary then in this case to further break the integrals. We computed
an approximation to the ATR point by considering ideals of norm up to $N = 430,000$, obtaining that
\[
J_\tau = -3.24024368505944150\ldots\cdot 10^{-12} - 42.392087963225793791\ldots i,
\]
which satisfies
\[
J_\tau \stackrel{?}{=} -2 P
\]
up to the prescribed precision of $12$ digits. This computation took less than $3$ minutes.

\subsection*{The curve $E_{509}$}
We consider here the curve already mentioned in Section~1 and in~\cite{DL}, defined over $F = \Q(\sqrt{509})$ and
given by the equation
\[
 y^2-xy-\omega y=x^3+(2+2w)x^2+(162+3w)x+(71+34\omega),\quad \omega=\frac{1+\sqrt{509}}{2}.
\]
We have that $C_F\simeq 22.5$, and therefore $\epsilon_F\simeq 0.0015$. Theorem~\ref{th: main} allows us to express any
integral $\int_{x_0}^{y_0}\int_{x_0}^{y_0}\omega_f^+$ as a sum of integrals having $\epsilon<\epsilon_0<\epsilon_F$. For
instance,
for a choice of $\epsilon_0=0.81\epsilon_F$ we obtain that each of those integrals could be computed to $12$ digits of
precision by summing over the Fourier coefficients of norm up to roughly $1.6\cdot 10^9$; a bound which, although being
large, is within
reach of the current technology.

We consider the ATR field $K = F(\beta)$ where $\beta = \sqrt{9144\omega + 98577}$, and the point $P\in E(K)$
of
infinite order having coordinates
\[
P= (\omega + 17 , \beta/2 + \sqrt{509}/2 + 9 ).
\]
The field extension $K/F$ has relative discriminant of norm $55$, which is relatively small. Write
\[
\cO_K = \cO_F + \alpha\cO_F,\quad \alpha^2 + \alpha = 127\sqrt{509} + 2865.
\]
The ATR points are conjectured to be defined over the Hilbert class field of $K$. Since $K$ has class number $2$, we
will need to  compute the points corresponding to the two non-equivalent optimal embeddings if we want to obtain a point
defined over $K$. The first of these embeddings maps
\[
\alpha \mapsto \varphi_1(\alpha)=\mtx{0}{254\omega + 2738}{1}{-1},
\]
whereas the second maps
\[
\alpha\mapsto \varphi_2(\alpha)=\mtx{0}{127\omega +1369}{2}{-1}.
\]
The fixed points for the induced action of $K^\times$ on $\cH$ given by the embedding $v_0\colon K\hookrightarrow  \C$
are, respectively:
\begin{align*}
\tau_0^{(1)}&= 0.5 + 0.024492046328012136937583\dots i\\
\tau_0^{(2)}&=\frac 12 \tau_0^{(1)}.
\end{align*}
Exploiting the non-uniqueness of quadratic continued fractions we obtain $4$ integrals for the first of the points, and
$8$ for the second. The then minimal imaginary parts are $\epsilon_{\text{min}}^{(1)}=0.01917$ and
$\epsilon_{\text{min}}^{(2)}= 0.002926$. Since $\epsilon_{\text{min}}^{(1)}>\epsilon_F$ and
$\epsilon_{\text{min}}^{(2)}>\epsilon_F$ we see that in this case it is not necessary to break further the integrals
using Theorem~\ref{th: main}.

In order to obtain about $12$ decimal digits of accuracy  we precomputed
the Fourier coefficients of all ideals up to norm $4\cdot 10^8$. The total computation time was under two days on the
$32$-processor machine specified at the beginning of this section. We should note that in this computation we heavily
exploited parallelism, both when computing the Fourier coefficients as well as during the integration step. The period
lattices for $E_K$ attached to the N\'eron differential $\omega_{E_K} = \frac{dx}{2y-x-\omega}$ are
\begin{align*}
\Lambda_0 &= \langle -5.38425378853615683456\dots, -7.44383552310672504690\dots i \rangle = \langle
\lambda_0^+,\lambda_0^-\rangle,\\
\Lambda_1 &= \langle 2.47855898378449003059\dots, 1.14589256545011559322\dots i\rangle = \langle
\lambda_1^+,\lambda_1^-\rangle.
\end{align*}

A preimage of $P$ on $\C/\Lambda_0$ under the Weierstrass map is
\[
z = -2.6921268942680784172834\dots - 5.1426086531573572370822\dots i.
\]

The computed values are
\begin{align*}
J_{\tau}^{(1)} &=  22.63291528772669504213102498\ldots i,\\
J_\tau^{(2)} &  = 106.761524788388057098773188\ldots - 73.6179507973347981534240251\ldots i.
\end{align*}
Setting $J_\tau = J_\tau^{(1)}+J_\tau^{(2)}$ we find that:
\[
\Big|\frac{J_\tau}{\lambda_1^+} - 4 z +10 \lambda_0^+\Big|\simeq 5.126285 \cdot 10^{-11},
\]
which suggests that
\[
J_\tau \stackrel{?}{=} 4 z.
\]

\bibliographystyle{amsalpha}
\bibliography{refs}

\providecommand{\bysame}{\leavevmode\hbox to3em{\hrulefill}\thinspace}
\providecommand{\MR}{\relax\ifhmode\unskip\space\fi MR }
\providecommand{\MRhref}[2]{%
  \href{http://www.ams.org/mathscinet-getitem?mr=#1}{#2}
}
\providecommand{\href}[2]{#2}
\begin{thebibliography}{BCDT01}

\bibitem[BCDT01]{BCDT}
Christophe Breuil, Brian Conrad, Fred Diamond, and Richard Taylor, \emph{On the
  modularity of elliptic curves over {$\bold Q$}: wild 3-adic exercises}, J.
  Amer. Math. Soc. \textbf{14} (2001), no.~4, 843--939 (electronic).
  \MR{1839918 (2002d:11058)}

\bibitem[BR07]{Matthias-Sinai}
Matthias Beck and Sinai Robins, \emph{Computing the continuous discretely},
  Undergraduate Texts in Mathematics, Springer, New York, 2007, Integer-point
  enumeration in polyhedra. \MR{2271992 (2007h:11119)}

\bibitem[Coo76]{cooke}
George~E. Cooke, \emph{A weakening of the {E}uclidean property for integral
  domains and applications to algebraic number theory. {I}}, J. Reine Angew.
  Math. \textbf{282} (1976), 133--156. \MR{0406973 (53 \#10758a)}

\bibitem[Dar01]{Da1}
Henri Darmon, \emph{Integration on {$\mathcal{H}_p\times\mathcal{H}$} and
  arithmetic applications}, Ann. of Math. (2) \textbf{154} (2001), no.~3,
  589--639. \MR{1884617 (2003j:11067)}

\bibitem[Dar04]{Da2}
\bysame, \emph{Rational points on modular elliptic curves}, CBMS Regional
  Conference Series in Mathematics, vol. 101, Published for the Conference
  Board of the Mathematical Sciences, Washington, DC, 2004. \MR{2020572
  (2004k:11103)}

\bibitem[Das05]{Das}
Samit Dasgupta, \emph{{S}tark-{H}eegner points on modular {J}acobians}, Ann.
  Sci. \'Ecole Norm. Sup. (4) \textbf{38} (2005), no.~3, 427--469. \MR{2166341
  (2006e:11080)}

\bibitem[Dem08]{De}
Lassina Demb{\'e}l{\'e}, \emph{An algorithm for modular elliptic curves over
  real quadratic fields}, Experiment. Math. \textbf{17} (2008), no.~4,
  427--438. \MR{2484426 (2010a:11119)}

\bibitem[DL03]{DL}
Henri Darmon and Adam Logan, \emph{Periods of {H}ilbert modular forms and
  rational points on elliptic curves}, Int. Math. Res. Not. (2003), no.~40,
  2153--2180. \MR{1997296 (2005f:11110)}

\bibitem[DRZ12]{DRZ}
Henri Darmon, Victor Rotger, and Yu~Zhao, \emph{The {B}irch and
  {S}winnerton-{D}yer conjecture for $\mathbb{Q}$-curves and {O}da's period
  relations}, 1--40.

\bibitem[Fre90]{Fr}
Eberhard Freitag, \emph{{H}ilbert modular forms}, Springer-Verlag, Berlin,
  1990. \MR{1050763 (91c:11025)}

\bibitem[G{\"a}r11a]{Ga-art}
J\'er\^ome G{\"a}rtner, \emph{{D}armon's points and quaternionic {S}himura
  varieties}, Arxiv preprint arXiv:1104.3338 (2011).

\bibitem[G{\"a}r11b]{Ga-th}
\bysame, \emph{Points de {D}armon et vari{\'e}t{\'e}s de {S}himura}, 2011.

\bibitem[GM12]{GM}
Xavier Guitart and Marc Masdeu, \emph{Computing continued fractions on real
  quadratic fields}, Mathematics of Computation \textbf{0} (2012).

\bibitem[Gre09]{Gr}
Matthew Greenberg, \emph{{S}tark-{H}eegner points and the cohomology of
  quaternionic {S}himura varieties}, Duke Math. J. \textbf{147} (2009), no.~3,
  541--575. \MR{2510743 (2010f:11097)}

\bibitem[GRZ12]{GRZ}
Xavier Guitart, Victor Rotger, and Yu~Zhao, \emph{Almost totally complex points
  on elliptic curves}, Arxiv preprint arXiv:1204.3402 (2012).

\bibitem[Lon06]{Lo}
Matteo Longo, \emph{On the {B}irch and {S}winnerton-{D}yer conjecture for
  modular elliptic curves over totally real fields}, Ann. Inst. Fourier
  (Grenoble) \textbf{56} (2006), no.~3, 689--733. \MR{2244227 (2008f:11071)}

\bibitem[LRV09]{LRV}
Matteo Longo, Victor Rotger, and Stefano Vigni, \emph{On rigid analytic
  uniformizations of {J}acobians of {S}himura curves}, Arxiv preprint
  arXiv:0910.3391 (2009).

\bibitem[SW01]{Sk-Wi}
Christopher~M. Skinner and Andrew~J. Wiles, \emph{Nearly ordinary deformations
  of irreducible residual representations}, Ann. Fac. Sci. Toulouse Math. (6)
  \textbf{10} (2001), no.~1, 185--215. \MR{1928993 (2004b:11073)}

\bibitem[Wil95]{Wi}
Andrew~J. Wiles, \emph{Modular elliptic curves and {F}ermat's last theorem},
  Ann. of Math. (2) \textbf{141} (1995), no.~3, 443--551. \MR{1333035
  (96d:11071)}

\end{thebibliography}

\end{document}